\definecolor{darkblue}{rgb}{0, 0, .4}
\definecolor{grey}{rgb}{.7, .7, .7}
\newtheorem{theorem}{Theorem}[section]
\newtheorem{lemma}[theorem]{Lemma}
\theoremstyle{definition}
\newtheorem{definition}[theorem]{Definition}
\newtheorem{example}[theorem]{Example}
\theoremstyle{remark}
\newtheorem{remark}[theorem]{Remark}
\numberwithin{equation}{section}
\theoremstyle{theorem}
\newtheorem{corollary}[theorem]{Corollary}
\newtheorem{proposition}[theorem]{Proposition}
\newcommand{\N}[0]{\mathbb{N}}
\newcommand{\ul}[1]{\underline{#1}}
\newcommand{\union}{\bigcup}
\newcommand{\hs}{}  
\newcommand{\hf}{\bullet}  
\newcommand{\hb}{{\color{grey} \bullet}}  
\newcommand{\heap}{ \xymatrix @=-9pt @! } 
\def\SDSize{6}  
\def\SDSizeTANRIGHT{4}  
\def\SDSizeTANLEFT{2}  
\def\SDMidpt{3}  
\def\SDColor{blue}
\def\SDEColor{black}
\newcommand{\StringRXD}[1]{{\color{\SDColor}\xy (\SDSize, \SDSize)*{}; (0, 0)*{}; **\crv{~**\dir{.}(\SDMidpt,\SDMidpt)};  (\SDMidpt, \SDMidpt)*{{\color{\SDEColor}#1}}; \endxy }}
\newcommand{\StringLXD}[1]{{\color{\SDColor}\xy (0, \SDSize)*{}; (\SDSize, 0)*{}; **\crv{~**\dir{.}(\SDMidpt,\SDMidpt)};  (\SDMidpt, \SDMidpt)*{{\color{\SDEColor}#1}}; \endxy }}
\newcommand{\StringL}[1]{{\color{\SDColor}\xy (\SDSize, \SDSize)*{}; (0, 0)*{}; (0, \SDSize)*{}; **\crv{(\SDSizeTANLEFT,\SDMidpt)};  (\SDMidpt, \SDMidpt)*{{\color{\SDEColor}#1}}; \endxy}}
\newcommand{\StringR}[1]{{\color{\SDColor}\xy (0,0)*{}; (\SDSize, 0)*{}; (\SDSize, \SDSize)*{}; **\crv{(\SDSizeTANRIGHT,\SDMidpt)}; (\SDMidpt, \SDMidpt)*{{\color{\SDEColor}#1}}; \endxy }}
\newcommand{\StringRX}[1]{{\color{\SDColor}\xy (\SDSize, \SDSize)*{}; (0, 0)*{}; **\crv{(\SDMidpt,\SDMidpt)};  (\SDMidpt, \SDMidpt)*{{\color{\SDEColor}#1}}; \endxy }}
\newcommand{\StringLR}[1]{{\color{\SDColor}\xy (0, 0)*{}; (0, \SDSize)*{}; **\crv{(\SDSizeTANLEFT,\SDMidpt)};  (\SDSize, 0)*{}; (\SDSize, \SDSize)*{}; **\crv{( \SDSizeTANRIGHT,\SDMidpt)}; (\SDMidpt, \SDMidpt)*{{\color{\SDEColor}#1}}; \endxy }}
\newcommand{\StringLRX}[1]{{\color{\SDColor}\xy (0, \SDSize)*{}; (\SDSize, 0)*{}; **\crv{(\SDMidpt,\SDMidpt)};  (\SDSize, \SDSize)*{}; (0, 0)*{}; **\crv{(\SDMidpt,\SDMidpt)}; (\SDMidpt, \SDMidpt)*{{\color{\SDEColor}#1}}; \endxy }}
\newcommand{\h}{\mathsf{h}}
\newcommand{\w}{\mathsf{w}}
\begin{document}

\title{The enumeration of maximally clustered permutations}

\begin{abstract}
The maximally clustered permutations are characterized by avoiding the
classical permutation patterns $\{3421, 4312, 4321 \}$.  This class contains
the freely braided permutations and the fully commutative permutations.  In
this work, we show that the generating functions for certain fully commutative
pattern classes can be transformed to give generating functions for the
corresponding freely braided and maximally clustered pattern classes.
Moreover, this transformation of generating functions is rational.  As a
result, we obtain enumerative formulas for the pattern classes mentioned above
as well as the corresponding hexagon-avoiding pattern classes where the
hexagon-avoiding permutations are characterized by avoiding $\{46718235,
46781235, 56718234, 56781234\}$.
\end{abstract}

\author{Hugh Denoncourt}
\address{Department of Mathematics, Box 395, Boulder, Colorado 80309-0395}
\email{\href{mailto:hugh.denoncourt@colorado.edu}{\texttt{hugh.denoncourt@colorado.edu}}}
\urladdr{\url{http://math.colorado.edu/\~denoncou/}}

\author{Brant C. Jones}
\address{Department of Mathematics, One Shields Avenue, University of California, Davis, CA 95616}
\email{\href{mailto:brant@math.ucdavis.edu}{\texttt{brant@math.ucdavis.edu}}}
\urladdr{\url{http://www.math.ucdavis.edu/\~brant/}}

\thanks{The second author received support from NSF grant DMS-9983797.}

\keywords{pattern avoidance, 2-sided weak Bruhat order, 321-hexagon,
freely braided, maximally clustered.}

\date{\today}

\maketitle



\bigskip
\section{Introduction}\label{s:background}

The maximally clustered permutations introduced in \cite{losonczy} are a
generalization of the freely braided permutations developed in \cite{g-l1} and
\cite{g-l2}, and these in turn include the fully commutative permutations
studied in \cite{s1} as a subset.  In \cite{j2}, an explicit formula was
obtained for the Kazhdan--Lusztig polynomials of maximally-clustered
hexagon-avoiding permutations, generalizing an earlier result of \cite{b-w}
that identified the 321-hexagon avoiding permutations.  The enumeration of the
321-hexagon avoiding permutations was first given by \cite{s-w} who showed
that these elements satisfy a linear constant-coefficient recurrence with 7
terms.

\begin{theorem}{\bf \cite{s-w}}\label{t:s-w}
The number $c_n$ of 321-hexagon-avoiding permutations in $S_n$ satisfies the
recurrence
\[ c_{n+1} = 6 c_{n} - 11c_{n-1} + 9c_{n-2} - 4c_{n-3} - 4c_{n-4} + c_{n-5} \]
for all $n \geq 8$ with initial conditions given in Figure~\ref{f:mc.enum}.
\end{theorem}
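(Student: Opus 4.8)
The plan is to establish that the ordinary generating function $C(x) = \sum_{n \geq 0} c_n x^n$ is a rational function whose denominator is $1 - 6x + 11x^2 - 9x^3 + 4x^4 + 4x^5 - x^6$. A sequence $(c_n)$ eventually satisfies a linear constant-coefficient recurrence exactly when its generating function is rational, and the recurrence coefficients can be read directly off the denominator polynomial: the claimed identity $c_{n+1} = 6c_n - 11c_{n-1} + \cdots + c_{n-5}$ is equivalent to the power series $C(x)$ being annihilated, up to finitely many low-order terms, by multiplication by $1 - 6x + 11x^2 - 9x^3 + 4x^4 + 4x^5 - x^6$. The qualifier ``up to finitely many low-order terms'' corresponds precisely to the threshold $n \geq 8$ and to the need to specify initial conditions, which I would handle by direct enumeration for small $n$ (Figure~\ref{f:mc.enum}).

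To obtain $C(x)$, I would pass from permutations to the combinatorial model of heaps. A permutation avoids $321$ if and only if it is fully commutative, in which case its reduced words are all related by commutation moves and assemble into a single heap poset; in type $A$ this heap embeds in a staircase so that each Coxeter generator $s_i$ occupies its own column $i$. In this encoding the hexagon patterns $\{46718235, 46781235, 56718234, 56781234\}$ correspond to a single forbidden sub-heap occupying three consecutive columns. Thus a $321$-hexagon-avoiding permutation is exactly a heap, read across columns $1, 2, \ldots, n-1$, in which this local configuration never appears.

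The key point is that scanning the heap one column at a time requires only a bounded amount of memory: to decide whether the entries of the next column complete a hexagon, it suffices to remember the relevant local profile of the preceding columns. This makes hexagon-avoidance a finite-state condition, so I would encode the admissible column-to-column transitions as a finite transfer matrix $T$ (equivalently, a finite automaton recognizing the legal heaps), with the variable $x$ tracking the rank $n$ and any offset absorbed into boundary terms. Summing over all accepting paths yields $C(x) = v^{\top}(I - xT)^{-1} w$ for suitable boundary vectors $v, w$, which is manifestly rational with denominator dividing $\det(I - xT)$. Rationality is exactly what forbidding the hexagon buys us: the full class of $321$-avoiding permutations is counted by the Catalan numbers, whose generating function $\frac{1 - \sqrt{1-4x}}{2x}$ is irrational, so the finite-state structure is a genuine consequence of hexagon-avoidance.

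The main obstacle is twofold. First, one must verify that hexagon-avoidance really is local in the column decomposition --- that a bounded window of preceding columns determines whether a hexagon is being completed --- and then enumerate the finitely many transition types; the subtlety is that heap entries can interact across non-adjacent columns through the staircase order, so the bookkeeping of which column profiles are simultaneously reachable and legal must be done carefully. Second, having built $T$, one must check that $\det(I - xT)$ reduces (after cancelling any common factor with the numerator) to the reciprocal polynomial $1 - 6x + 11x^2 - 9x^3 + 4x^4 + 4x^5 - x^6$; this is a finite computation, but it is where the specific coefficients $6, -11, 9, -4, -4, 1$ and the threshold $n \geq 8$ emerge. Finally, I would reconcile the automaton count with the initial values $c_0, \ldots, c_7$ to pin down the numerator and confirm that the recurrence holds for all $n \geq 8$.
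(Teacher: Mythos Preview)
Your proposal has a factual error and a substantive gap.  The hexagon heap $s_5 s_6 s_7 s_3 s_4 s_5 s_6 s_2 s_3 s_4 s_5 s_1 s_2 s_3$ occupies seven consecutive columns, not three; the four 1-line patterns collapse to this single heap under Theorem~\ref{t:translate_olh}, but nothing about it is confined to a three-column window.

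More seriously, the assertion that ``scanning the heap one column at a time requires only a bounded amount of memory'' is exactly the point that needs an argument, and you do not supply one.  A fully commutative heap can have arbitrarily many entries in a single column (each minimal pair resolved distinctly), so the column profile is not a priori a finite alphabet; this is why the $[321]$-avoiding class has the algebraic Catalan generating function rather than a rational one.  Detecting a hexagon is indeed local, but \emph{generating} valid heaps column by column is not: the state must record enough of the boundary to know how the next column's entries interleave with the previous ones, and nothing you have said bounds that.  You correctly flag this as the main obstacle, but ``hexagon-avoidance makes it finite-state'' is the theorem, not a step toward it.

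The paper's route is quite different and supplies precisely the missing mechanism.  Rather than a transfer matrix on raw columns, it introduces the \emph{diamond reduction} (Section~\ref{s:diamond}): replace each minimal diamond of the heap by a single entry.  Theorem~\ref{t:main_diamond} shows that the generating function for heaps avoiding $h$ is a rational transform of the generating function for heaps avoiding the diamond reduction $g$ of $h$.  Applied to the hexagon, one reduction gives the $3$-hexagon $q = s_4 s_5 s_2 s_3 s_4 s_1 s_2$; a direct analysis then shows that $q$-avoiding heaps decompose into regions whose diamond reductions are \emph{monotone lattice paths}, which are genuinely finite-state.  Feeding $G_c(x) = \frac{2x}{1-x} - x$ through the explicit rational transforms of Lemma~\ref{l:dr_connected} and Theorem~\ref{t:main_diamond} yields the denominator $1-6x+11x^2-9x^3+4x^4+4x^5-x^6$ and hence the recurrence.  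So the paper earns rationality by a structural decomposition into lattice-path pieces, not by positing a finite automaton on columns.
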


Theorem~\ref{t:s-w} was extended in \cite{mansour-stankova} and also proved
using an enumeration scheme as described in \cite{vatter}.
Figure~\ref{f:mc.enum} shows the number $b_n$ and $m_n$ of freely-braided
hexagon-avoiding and maximally-clustered hexagon-avoiding permutations
respectively, in $S_n$ for $n \leq 15$.  It can be shown that each of these
pattern classes have a rational generating function by Proposition 13 and
Corollary 10 of \cite{a-l-r}.  
In this paper, we develop a method for determining the generating function
precisely.  

{\tiny
\begin{figure}[h]
\begin{center}
\begin{tabular}{|p{1.05in}|p{0.1in}|p{0.15in}|p{0.15in}|p{.15in}|p{0.15in}|p{0.2in}|p{0.2in}|p{0.25in}|p{0.25in}|p{0.3in}|p{.35in}|p{0.4in}|p{0.4in}|p{0.4in}|}
\hline
$n$ & 3 & 4 & 5 & 6 & 7 & 8 & 9 & 10 & 11 & 12 & 13 & 14 & 15 \\
\hline
\# $[321]$-hexagon avoiding & 5 & 14 & 42 & 132 & 429 & 1426 & 4806 & 16329 & 55740 & 190787 & 654044 & 2244153 & 7704047 \\
\hline
\# freely-braided hexagon-avoiding & 6 & 20 & 71 & 260 & 971 & 3670 & 13968 & 53369  & 204352  & 783408  & 3005284 &  11533014 &  44267854 \\
\hline
\# maximally-clustered hexagon-avoiding & 6 & 21 & 78 & 298 & 1157 & 4535 & 17872 & 70644 & 279704  & 1108462 & 4395045 & 17431206 &  69144643 \\
\hline
\end{tabular}
\end{center}
\caption{Enumeration of hexagon-avoiding classes}
\label{f:mc.enum}
\end{figure}
}

One feature of our main results, Theorem~\ref{t:enumerate} and
Theorem~\ref{t:main_diamond} below, is that they provide many examples of
classes characterized by permutation pattern avoidance that have rational
generating functions.  We also apply Theorem~\ref{t:enumerate} to enumerate the
maximally-clustered permutations introduced in \cite{losonczy}.  The freely
braided permutations have previously been enumerated in \cite{mansour} and we
recover this result from Theorem~\ref{t:enumerate} as well.

Section~\ref{s:background} describes the combinatorial model used in our
enumeration.  In Section~\ref{s:enumeration} we give a result that enables us
to find enumerative formulas for the pattern classes mentioned above in a
unified way.  In Section~\ref{s:diamond} we show how the methods of proof from
Section~\ref{s:enumeration} can be applied in the fully commutative case to
recover the main result of \cite{s-w} as well as some new generating functions.

\subsection{Background}

We view the symmetric group $S_n$ as the Coxeter group of type $A$ with
generating set $S = \{ s_1, \dots, s_{n-1} \}$ and relations of the form $(s_i
s_{i \pm 1})^3 = 1$ together with $(s_{i}s_{j})^{2} = 1$ for $| i - j | \geq
2$ and ${s_i}^2 = 1$.  We also refer to elements in the symmetric group by the \textit{1-line
notation} $w=[w_{1}w_{2} \dots w_{n}]$ where $w$ is the bijection mapping $i$
to $w_{i}$.  Then the generators $s_{i}$ are the adjacent transpositions
interchanging the entries $i$ and $i+1$ in the 1-line notation.  Suppose
$w=[w_1 \dots w_n]$ and $p=[p_1 \ldots p_k]$ is another permutation in $S_{k}$
for $k\leq n$.  We say $w$ \textit{contains the permutation pattern} $p$ or $w$
\textit{contains} $p$ \textit{as a 1-line pattern} whenever there exists a
subsequence $1\leq i_{1}<i_{2}<\ldots<i_{k}\leq n$ such that
\begin{equation*}
w_{i_{a}} < w_{i_{b}} \text{ if and only if } p_{a} < p_{b}
\end{equation*}
for all $1 \leq a < b \leq k$.  We call $(i_1, i_2, \dots, i_k)$ the \em
pattern instance\em.  For example, $[\ul{5}32\ul{4}\ul{1}]$ contains the
pattern $[321]$ in several ways, including the underlined subsequence.  If $w$
does not contain the pattern $p$, we say that $w$ \em avoids \em $p$.
A \em pattern class \em is a set of permutations characterized by avoiding a set
of permutation patterns.  For example, the maximally clustered permutations are
characterized by avoiding the permutation patterns
\begin{align}\label{e:mc.patterns}
    \text{ $[3421]$, $[4312]$, and $[4321]$ }
\end{align}
by \cite[Proposition 3.7]{losonczy},
while the freely braided permutations are characterized by avoiding
\begin{align}\label{e:fb.patterns}
    \text{ $[4231]$, $[3421]$, $[4312]$, and $[4321]$ }
\end{align}
as permutation patterns by \cite[Proposition 5.1.1]{g-l1}.

Recall that the products of generators from $S$ with a minimal number of
factors are called \em reduced expressions\em, and $l(w)$ is the length of such
an expression for $w \in S_n$.  Given $w \in S_n$, we represent reduced
expressions for $w$ in sans serif font, say $\w=\w_{1} \w_{2}\cdots \w_{p}$
where each $\w_{i} \in S$.  We call any expression of the form $s_i s_{i \pm 1}
s_i$ a \em short-braid\em.  There is a well-known theorem of Matsumoto
\cite{matsumoto} and Tits \cite{t}, which states that any reduced expression for
$w$ can be transformed into any other by applying a sequence of relations of the
form $(s_i s_{i \pm 1})^3 = 1$ together with $(s_{i}s_{j})^{2} = 1$ for $| i - j
| > 1$.  The theorem implies that the set of all generators appearing in any
reduced expression for $w$ is well-defined.  We call this set of generators the
\em support \em of $w$ and denote it by $supp(w)$.  We say that the permutation
$w$ is \em connected \em if the subscripts of the generators appearing in
$supp(w)$ form a nonempty interval in $\{1, 2, \ldots, n-1\}$.

As in \cite{s1}, we define an equivalence relation on the set of reduced
expressions for a permutation by saying that two reduced expressions are in the
same \em commutativity class \em if one can be obtained from the other by a
sequence of \em commuting moves \em of the form $s_i s_j \mapsto s_j s_i$ where
$|i-j| \geq 2$.  If the reduced expressions for a permutation $w$ form a single
commutativity class, then we say $w$ is \em fully commutative\em.

\subsection{Heaps}

If $\w = \w_1 \cdots \w_k$ is a reduced expression, then as in \cite{s1} we
define a partial ordering on the indices $\{1, \cdots, k\}$ by the transitive
closure of the relation $i \prec j$ if $i < j$ and $\w_i$ does not commute with
$\w_j$.  We label each element $i$ of the poset by the corresponding generator
$\w_i$.  It follows from the definition that if $\w$ and $\w'$ are two reduced
expressions for a permutation $w$ that are in the same commutativity class, then
the labeled posets of $\w$ and $\w'$ are isomorphic.  This isomorphism class of
labeled posets is called the \em heap \em of $\w$, where $\w$ is a reduced
expression representative for a commutativity class of $w$.  In particular, if
$w$ is fully commutative then it has a single commutativity class, and so there
is a unique heap of $w$.

As in \cite{b-w}, we will represent a heap as a set of lattice points embedded
in $\N^2$.  To do this, we assign coordinates $(x,y) \in \N^2$ to each entry of
the labeled Hasse diagram for the heap of $\w$ in such a way that:
\begin{enumerate}
\item[(1)] An entry represented by $(x,y)$ is labeled $s_i$ in the heap if and
only if $x = i$, and
\item[(2)] If an entry represented by $(x,y)$ is greater than an entry
represented by $(x',y')$ in the heap, then $y > y'$.
\end{enumerate}
Since the Coxeter graph of type $A$ is a path, it follows from the definition
that $(x,y)$ covers $(x',y')$ in the heap if and only if $x = x' \pm 1$, $y >
y'$, and there are no entries $(x'', y'')$ such that $x'' \in \{x, x'\}$ and $y'
< y'' < y$.  Hence, we can completely reconstruct the edges of the Hasse
diagram and the corresponding heap poset from a lattice point representation.
This representation will enable us to make arguments ``by picture'' that would
otherwise be difficult to formulate.  Although there are many coordinate
assignments for any particular heap, the $x$ coordinates of each entry are
fixed for all of them, and the coordinate assignments of any two entries only
differs in the amount of vertical space between them.

\begin{example}\label{ex:heap}
One lattice point representation of the heap of $w = s_2 s_3 s_1 s_2 s_4$ is
shown below, together with the labeled Hasse diagram for the unique heap poset
of $w$.

\smallskip
\begin{center}
\begin{tabular}{ll}
\xymatrix @=-4pt @! {
\hs & \hs & \hs & \hs & \hs \\
& \hf & \hs & \hf & \hs & \hs \\
\hf & \hs & \hf & \hs & \hs & \hs \\
& \hf & \hs & \hs & \hs & \hs \\
s_1 & s_2 & s_3 & s_4 \\
} &
\xymatrix @=0pt @! {
 & \hf_{s_2} \ar@{-}[dl] \ar@{-}[dr] & & \hf_{s_4} \ar@{-}[dl] \\
 \hf_{s_1} \ar@{-}[dr] & & \hf_{s_3} \ar@{-}[dl] \\
 & \hf_{s_2} \\
} \\
\end{tabular}
\end{center}
\end{example}

Suppose $x$ and $y$ are a pair of entries in the heap of $\w$ that correspond
to the same generator $s_i$, so that they lie in the same column $i$ of the heap.
Assume that $x$ and $y$ are a \em minimal pair \em in the sense that there is
no other entry between them in column $i$.  Then, for $\w$ to be reduced, there
must exist at least one non-commuting generator between $x$ and $y$, and if
$\w$ is short-braid avoiding, there must actually be two non-commuting labeled
heap entries that lie strictly between $x$ and $y$ in the heap.  We call these
two non-commuting labeled heap entries a \em resolution \em of the pair $x,y$.
If the generators lie in distinct columns, we call the resolution a \em distinct
resolution\em.  The Lateral Convexity Lemma of \cite{b-w} characterizes
fully commutative permutations $w$ as those for which every minimal pair in the
heap of $w$ has a distinct resolution.

\begin{definition} \label{d:supports}
If $s_i \in supp(w)$, we say that $s_i$ \emph{supports} $w$.  We also
say that column $i$ supports $w$.  If $w$ is fully commutative and 
connected then $supp(w) = \{s_i, s_{i+1}, \ldots, s_j\}$
for some $i, j$, and since every minimal pair of entries in the heap of $w$ has
a distinct resolution we must have exactly one entry in columns $i$ and $j$ of
the heap.  In this situation, we call columns $i+1, i+2, \ldots, j-1$ the \em
internal columns \em of the heap of $w$ and we call columns $i$ and $j$ the \em
extremal columns \em of the heap of $w$.
\end{definition}

We now describe a notion of containment for heaps.  Recall from \cite{b-j1} that
an \em orientation preserving Coxeter embedding \em $f: \{s_1, \dots, s_{k-1} \}
\rightarrow \{s_1, \dots, s_{n-1} \}$ is an injective map of Coxeter generators such
that for each $m \in \{2, 3\}$, we have
\[ (s_i s_j)^{m} = 1 \text{ if and only if } (f(s_i) f(s_j))^{m} = 1 \]
and the subscript of $f(s_i)$ is less than the subscript of $f(s_j)$ whenever $i
< j$.  We view this as a map of permutations, which we also denote $f: S_k
\rightarrow S_n$, by extending it to a word homomorphism which can then be
applied to any reduced expression in $S_k$.

Recall that a subposet $Q$ of $P$ is called \em convex \em if $y \in Q$
whenever $x < y < z$ in $P$ and $x, z \in Q$.  Suppose that $w$ and $h$ are
permutations.  We say that $w$ \em heap-contains \em $h$ if there exist
commutativity classes represented by $\w$ and $\h$, together with an orientation
preserving Coxeter embedding $f$ such that the heap of $f(\h)$ is contained as
a convex labeled subposet of the heap of $\w$.  If $w$ does not heap-contain
$h$, we say that $w$ \em heap-avoids \em $h$.  To illustrate, $w = s_2 s_3 s_1
s_2 s_4$ from Example~\ref{ex:heap} heap-contains $s_1 s_2 s_3$ under the
Coxeter embedding that sends $s_i \mapsto s_{i+1}$, but $w$ heap-avoids $s_1
s_2 s_1$.

In type $A$, the heap construction can be combined with another
combinatorial model for permutations in which the entries from the
1-line notation are represented by strings.  The points at which two
strings cross can be viewed as adjacent transpositions of the 1-line
notation.  Hence, we may overlay strings on top of a heap diagram to
recover the 1-line notation for the permutation, by drawing the strings
from bottom to top so that they cross at each entry in the
heap where they meet and bounce at each lattice point not in
the heap.  Conversely, each permutation string diagram corresponds
with a heap by taking all of the points where the strings cross as the
entries of the heap.

For example, we can overlay strings on the two heaps of $[3214]$.
Note that the labels in the picture below refer to the strings, not
the generators.
\begin{center}
\begin{tabular}{ll}
    \heap{
    & 3 \ \ 2 &  & 1 \ \ 4 & \\
    \StringR{\hs} & \hs & \StringLRX{\hf} & \hs & \StringL{\hs} \\
    \hs & \StringLRX{\hf} & \hs & \StringLR{\hs} & \hs \\
    \StringR{\hs} & \hs & \StringLRX{\hf} & \hs & \StringL{\hs} \\
    & 1 \ \ 2 &  & 3 \ \ 4 & \\
    } &
    \heap{
    & 3 \ \ 2 &  & 1 \ \ 4 &  \\
    \hs & \StringLRX{\hf} & \hs & \StringLR{\hs} \\
    \StringR{\hs} & \hs & \StringLRX{\hf} & \hs & \StringL{\hs} \\
    \hs & \StringLRX{\hf} & \hs & \StringLR{\hs} \\
    & 1 \ \ 2 &  & 3 \ \ 4 &  \\
    } \\
\end{tabular}
\end{center}

For a more leisurely introduction to heaps and string diagrams, as well as
generalizations to Coxeter types $B$ and $D$, see \cite{b-j1}.  Cartier and
Foata \cite{cartier-foata} were among the first to study heaps of dimers, which
were generalized to other settings by Viennot \cite{viennot}.  Stembridge has
studied enumerative aspects of heaps \cite{s1,s2} in the context of fully
commutative elements.  Green has also considered heaps of pieces with
applications to Coxeter groups in \cite{green1,green2,green3}.

\subsection{Maximally clustered elements}

In \cite{losonczy}, Losonczy introduced the maximally clustered elements of
simply laced Coxeter groups.

\begin{definition}{\bf \cite{losonczy}}\label{d:mc}
A \em braid cluster \em is an expression of the form \[ s_{i_1} s_{i_2} \dots
s_{i_k} s_{i_{k+1}} s_{i_k} \dots s_{i_2} s_{i_1} \] where each $s_{i_p}$ for
$1 \leq p \leq k$ has a unique $s_{i_q}$ with $p < q \leq k+1$ such that
$|i_p - i_q| = 1$.

Let $w$ be a permutation and let $N(w)$ denote the number of $[321]$ pattern
instances in $w$.  We say $w$ is \em maximally clustered \em if there is a
reduced expression for $w$ of the form
\[ a_0 c_1 a_1 c_2 a_2 \dots c_M a_M \]
where each $a_i$ is a reduced expression, each $c_i$ is a braid cluster with
length $2 n_i + 1$ and $N(w) = \sum_{i=1}^{M} n_i$.  Such an expression is
called \em contracted\em.  In particular, $w$ is \em freely braided \em if
there is a reduced expression for $w$ with $N(w)$ disjoint short-braids.
\end{definition}

This is not the original definition for the maximally clustered elements;
however it is equivalent.  The remarks in Section 5 of \cite{g-l1} show that the
number of $[321]$ pattern instances in $w$ equals the number of contractible
triples of roots in the inversion set of $w$.  Corollary 4.11(ii) and
Corollary 4.13 of \cite{losonczy} prove that $\w$ is a contracted reduced
expression for a maximally clustered element if and only if it has the form
given in Definition~\ref{d:mc}.  Moreover, it follows from the proof of
\cite[Corollary 4.11(ii)]{losonczy} that the $a_i$ in Definition~\ref{d:mc} are
fully commutative.

Recall that \cite{b-j-s} showed that $w$ is fully commutative whenever $N(w) =
0$.  In this work we will frequently use the fact that any braid cluster has the
canonical form of Lemma~\ref{l:braid_cluster}.

\begin{lemma}\label{l:braid_cluster}
Suppose $x = s_{i_1} s_{i_2} \dots s_{i_{k}} s_{i_{k+1}} s_{i_{k}} \dots s_{i_2}
s_{i_1}$ is a braid cluster of length $2 k + 1$ in type $A$.  Then, $x = s_{m+1}
s_{m+2} \dots s_{m+{k}} s_{m+k+1} s_{m+{k}} \dots s_{m+2} s_{m+1}$ for some $m$.
\end{lemma}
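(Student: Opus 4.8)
The plan is to induct on $k$, peeling off the outermost generator $s_{i_1}$ from each side and showing that the central expression that remains is a braid cluster with one fewer pair. The crucial point to keep in mind throughout is that a braid cluster of length $2k+1$ is a \emph{reduced} expression, so its length really equals $2k+1$; it is this reducedness, and not the adjacency condition by itself, that forces the indices to form a consecutive run. For the base case $k=1$ we have $x = s_{i_1} s_{i_2} s_{i_1}$, and the condition for $p=1$ forces $|i_1 - i_2| = 1$. If $i_2 = i_1 + 1$ the expression is already in the desired form with $m = i_1 - 1$, and if $i_2 = i_1 - 1$ a single braid relation $s_{i_1} s_{i_1-1} s_{i_1} = s_{i_1-1} s_{i_1} s_{i_1-1}$ puts it in the desired form with $m = i_1 - 2$.

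For the inductive step ($k \geq 2$) I would write $x = s_{i_1}\, y\, s_{i_1}$ with $y = s_{i_2} \cdots s_{i_{k+1}} \cdots s_{i_2}$, and first check that $y$ is itself a braid cluster: restricting the hypothesis of the Lemma to indices $p \geq 2$ gives exactly the defining adjacency property of $y$. A length count then shows $y$ is reduced of length $2k-1$: since $l(x) = l(s_{i_1} y s_{i_1}) \leq 2 + l(y)$ and the word $y$ has $2k-1$ letters, the equality $l(x) = 2k+1$ forces $l(y) = 2k-1$. By the inductive hypothesis $y$ is, as a group element, the canonical palindrome on the consecutive generators $s_{m'+1}, \ldots, s_{m'+k}$ for some $m'$; I would also record once (by an easy induction, again peeling outermost generators) the elementary fact that such a canonical palindrome equals the transposition interchanging its two extreme points, so that $y = (m'+1,\, m'+k+1)$ with $supp(y) = \{s_{m'+1}, \ldots, s_{m'+k}\}$.

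It then remains to locate $i_1$ relative to the run $\{m'+1, \ldots, m'+k\}$. The adjacency condition for $p=1$ says $i_1$ differs by $1$ from exactly one of $i_2, \ldots, i_{k+1}$, i.e. from exactly one element of $\{m'+1, \ldots, m'+k\}$. This rules out any $i_1$ in the strict interior $\{m'+2, \ldots, m'+k-1\}$, which would have two neighbors in the run, and any $i_1$ at distance at least $2$ from the entire run, which would have none. The surviving dangerous cases are $i_1 = m'+1$ and $i_1 = m'+k$, the two endpoints already lying in $supp(y)$, both of which \emph{do} satisfy the adjacency condition. \textbf{This is the main obstacle}, and it is dispatched not by the adjacency condition but by reducedness: if $i_1 \in supp(y)$ then, writing $y$ in one-line notation, one checks at an endpoint that $y^{-1}(i_1) > y^{-1}(i_1+1)$, so left multiplication by $s_{i_1}$ is length-decreasing and $l(s_{i_1} y s_{i_1}) \leq l(y) < 2k+1$, contradicting the reducedness of $x$. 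Hence the only possibilities are $i_1 = m'$ or $i_1 = m'+k+1$, the two indices just outside the run.

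I would finish by conjugation in these two cases. If $i_1 = m'$ then $x = s_{m'}\,(m'+1,\, m'+k+1)\,s_{m'} = (m',\, m'+k+1)$, which is the canonical braid cluster on the consecutive generators $s_{m'}, \ldots, s_{m'+k}$ with $m = m'-1$; if $i_1 = m'+k+1$ then $x = (m'+1,\, m'+k+2)$, the canonical braid cluster with $m = m'$. In both cases the support is an interval of $k+1$ consecutive generators and $x$ agrees as a group element with the stated palindrome, completing the induction. The one place demanding care, and which I expect to be the genuine content of the argument, is the endpoint elimination in the third paragraph, where one must invoke reducedness; the rest is bookkeeping about adjacency and conjugation of transpositions.
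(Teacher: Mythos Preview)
Your inductive argument is correct. The paper takes a quite different and much shorter route: since $x$ is a conjugate of the simple transposition $s_{i_{k+1}}$, it is itself a transposition, say $(m+1,\,m+k'+2)$; the canonical palindrome $s_{m+1}\cdots s_{m+k'+1}\cdots s_{m+1}$ is a known reduced expression for this transposition of length $2k'+1$, and matching lengths with the given reduced word forces $k'=k$. Your approach is more hands-on and entirely self-contained---it does not invoke cycle type or cite an external lemma on reduced expressions for transpositions---and it makes the role of reducedness very explicit in the endpoint-elimination step, whereas the paper uses reducedness only implicitly in the final length comparison. One small point you pass over: the translation from ``exactly one of $i_2,\ldots,i_{k+1}$'' to ``exactly one element of $\{m'+1,\ldots,m'+k\}$'' requires the $i_j$ for $j\geq 2$ to be pairwise distinct, which does hold because the reduced subword for $y$ has support of size exactly $k$. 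Both proofs ultimately identify $x$ with the same transposition; the paper gets there in one line via conjugation-preserves-cycle-type, while you arrive by an explicit inductive conjugation.
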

\begin{proof}
Since $s_{i_{k+1}}$ is a transposition and conjugation preserves cycle type, $x$
is a transposition and we may write $x = (m+1 \,\,\,\, m+k'+2)$ in cycle
notation for some $k',m \geq 0$.  This transposition is given by the expression
$\mathsf{x} = s_{m+1} s_{m+2} \cdots s_{m+k'} s_{m+k'+1} s_{m+k'} \cdots s_{m+2}
s_{m+1}$, which is a reduced expression for $x$ by \cite[Lemma 4.3]{losonczy}.
Since the length of reduced expressions for $x$ is an invariant of $x$, $k' = k$
and the result follows.
\end{proof}

Recall the following structural lemma about contracted reduced expressions.

\begin{lemma}{\bf \cite[Lemma 2.3]{j2}}\label{l:unique}
Let $\w$ be a contracted reduced expression for a maximally clustered
permutation, so $\w$ has the form
\[ a_0 c_1 a_1 \dots c_{M} a_{M} \]
where each $c_j$ is a braid cluster, and the $a_j$ are short-braid
avoiding.  Then, any generator $s_i$ that appears in any of the braid clusters
$c_j$ does not appear anywhere else in $\w$.
\end{lemma}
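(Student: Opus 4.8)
The plan is to prove the statement by a counting argument on $[321]$ pattern instances, using the fact recalled above that $N(w)$ equals the number of contractible triples of roots in the inversion set of $w$. In the string-diagram model, each inversion of $w$ is a crossing of two strings, and a contractible triple corresponds to three strings that pairwise cross; equivalently, to three positions that are pairwise in inversion, which is exactly a $[321]$ pattern instance. So $N(w)$ counts the triples of mutually crossing strings in the diagram obtained from $\w$, and each such triple is determined by its three crossings, that is, by three entries of the heap of $\w$.

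First I would put each braid cluster into the canonical form of Lemma~\ref{l:braid_cluster}, so that $c_j$ is the transposition $(m_j+1,\ m_j+n_j+2)$ supported on the interval $\{s_{m_j+1}, \dots, s_{m_j+n_j+1}\}$. Writing this transposition in one-line notation shows it has exactly $n_j$ instances of $[321]$: each is formed by the maximal value $m_j+n_j+2$, one of the $n_j$ intermediate values, and the minimal value $m_j+1$. In the string diagram these $n_j$ triples all share the peak crossing in column $m_j+n_j+1$, are otherwise distinct, and use only heap entries coming from $c_j$. Since the heap entries of distinct braid clusters occupy disjoint positions of $\w$, triples lying entirely within different clusters are distinct, which yields the baseline bound $N(w) \ge \sum_j n_j$. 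Because $\w$ is contracted we have equality, and I conclude that \emph{every} $[321]$ instance of $w$ is realized entirely by crossings belonging to a single braid cluster.

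With this in hand, the proof is by contradiction: suppose a generator $s_i$ appearing in $c_j$ also appears at some position $q$ of $\w$ lying outside $c_j$, either in some $a_l$ or in another cluster $c_l$, and assume without loss of generality that $q$ lies to the right of $c_j$, the left-hand case being symmetric. The crossing at position $q$ lies in column $i$, one of the columns spanned by the pyramid-shaped heap of $c_j$. Using the explicit shape of that pyramid together with the convexity of heaps, I would trace the two strings meeting at $q$ back through $c_j$ and exhibit a third string with which they both cross, producing a triple of pairwise crossing strings that uses the crossing at $q$. Such a triple is not contained in the heap of any single braid cluster, so it is an additional $[321]$ instance beyond the $\sum_j n_j$ already accounted for, giving $N(w) > \sum_j n_j$ and contradicting the contracted hypothesis.

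The main obstacle is this last step: producing the extra pattern and certifying that it is genuinely new. The difficulty is that the index intervals supporting different braid clusters and the $a_l$ may overlap, so the new triple must be pinned down by the \emph{positions} of its crossings rather than by generator indices, and the trace-back must be argued carefully from the canonical pyramid. I expect to organize this into a small number of local configurations, determined by whether $s_i$ is the peak generator of $c_j$ or one of the doubled generators and by whether the reoccurrence at $q$ is immediately resolved by $s_{i\pm 1}$, each of which can be checked ``by picture'' using heap convexity and the Lateral Convexity Lemma of \cite{b-w} to guarantee the required resolving crossing.
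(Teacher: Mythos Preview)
The paper does not prove this lemma: it is quoted from \cite[Lemma~2.3]{j2} with no argument reproduced, so there is nothing here to compare your proposal against directly. Your overall strategy---localize all $N(w)=\sum_j n_j$ of the $[321]$ instances inside individual braid clusters, then show that any extra occurrence of a cluster generator manufactures one more---is sound and is very likely what the cited proof does.

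The gap you flag is real but closes more cleanly than your sketch suggests, once you normalize correctly. Take $q$ to be the \emph{first} position after $c_j$ at which \emph{any} generator of $\mathrm{supp}(c_j)=\{s_{m+1},\dots,s_{m+k+1}\}$ reoccurs (the case of $q$ before $c_j$ is symmetric). Then no entry in columns $m{+}1,\dots,m{+}k{+}1$ lies between the end of $c_j$ and $q$, so the intermediate strings of $c_j$ still occupy positions $m{+}2,\dots,m{+}k{+}1$ just before~$q$. If $m{+}1<i<m{+}k{+}1$, the crossing at $q$ involves two intermediate strings, and either extremal string $\alpha,\beta$ (each of which crossed both inside $c_j$) completes a new mutually-crossing triple. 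If $i=m{+}k{+}1$, the string at position $m{+}k{+}2$ cannot still be $\alpha$ (else $\w$ is not reduced), so it is some $\epsilon$ that has swapped relative position with $\alpha$ via columns $\ge m{+}k{+}2$; hence $\epsilon$ crossed $\alpha$, and together with the top intermediate string $\mu$ you obtain the triple $\{\alpha,\mu,\epsilon\}$. The case $i=m{+}1$ is symmetric with $\beta$ in place of~$\alpha$. In each case the new triple uses the crossing at $q$ together with a crossing inside $c_j$, so it lies in no single braid cluster and hence is not among the $\sum_j n_j$ already counted. Choosing $q$ minimal is the key device: it freezes the intermediate strings and collapses your ``small number of local configurations'' to exactly three.

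One caveat: your argument tacitly assumes each $c_j$ has length at least~$3$, so that intermediate strings exist. Definition~\ref{d:mc} as written permits trivial length-$1$ clusters with $n_j=0$, and for such a decomposition the conclusion can fail (e.g.\ write $s_2 s_1 s_3 s_2$ as $(\,)\,(s_2)\,(s_1 s_3 s_2)$). This is an artifact of the paper's phrasing rather than of your method; the intended reading has $n_j\ge 1$, and you should say so explicitly or else note that trivial clusters may be harmlessly absorbed into the adjacent $a_i$.
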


\begin{lemma}\label{l:constant_support}
Let $\w$ be a contracted reduced expression for a maximally clustered
permutation.  If the generator $s_i$ supports a braid cluster in $\w$ then $s_i$
supports a braid cluster in every contracted reduced expression for $w$.
\end{lemma}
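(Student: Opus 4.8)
The plan is to show that the set of generators supporting braid clusters has a description intrinsic to $w$, so that it cannot depend on the choice of contracted reduced expression. Fix two contracted reduced expressions $\w$ and $\w'$ for $w$; it suffices to prove that the two collections of braid-cluster-supporting generators coincide. First I would record the key consequence of the contracted hypothesis: a braid cluster of length $2k+1$ contributes exactly $k$ instances of $[321]$ (using the canonical form of Lemma~\ref{l:braid_cluster}), and distinct braid clusters have disjoint supports by Lemma~\ref{l:unique}, so the equality $N(w)=\sum_i n_i$ in Definition~\ref{d:mc} forces every $[321]$ pattern instance of $w$ to lie entirely inside a single braid cluster. In particular no $[321]$ instance spans two clusters or meets a fully commutative factor $a_i$.

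Next I would pass to the string-diagram model. Reading a braid cluster as a transposition gadget, its strings split into two extreme strings $L,R$, which cross each other once and cross every remaining string of the cluster, together with middle strings that pairwise do not cross; the $[321]$ instances of the cluster are then exactly the triples $\{L,R,G\}$ with $G$ a middle string. Because the full set of $[321]$ instances is intrinsic to $w$ and, by the previous paragraph, partitions into clusters, I would form the graph on strings whose edges join two strings lying in a common $[321]$ triple. Its connected components are precisely the string-sets of the braid clusters, and for $k\ge 2$ the two extreme strings are recovered as the vertices of maximal degree. Thus the grouping of strings into clusters, together with the extreme/middle roles, is the same for $\w$ and $\w'$.

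It then remains to show that two braid clusters with the same string-set have the same support. In canonical form a braid cluster occupies a contiguous block of tracks $[\alpha,\beta]$, and its support is $\{s_\alpha,\dots,s_{\beta-1}\}$; so I must show that the block $[\alpha,\beta]$ — equivalently the number of strings lying below it — is determined by $w$. The mechanism is that an external string can cross at most one extreme string, since crossing both $L$ and $R$ would produce a $[321]$ triple not internal to this cluster; combined with the level-exclusivity of Lemma~\ref{l:unique}, this should pin down, for each external string, whether it passes below or above the block. Granting this, $\alpha=1+\#\{\text{strings below the block}\}$ and $\beta$ become invariants of $w$, the two supports agree, and the lemma follows.

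The hard part will be this last step: unlike the crossing data, the track-block $[\alpha,\beta]$ is an intermediate (time-dependent) feature of the wiring diagram, and a priori a different contracted expression could route the external strings so as to slide the block up or down. Establishing that the side of each external string is forced — so that the block cannot slide — is where the real content lies, and I expect to argue it ``by picture'' using convexity of the cluster's subheap together with the level-exclusivity of Lemma~\ref{l:unique}.
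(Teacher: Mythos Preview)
Your global approach differs substantially from the paper's local one. The paper connects any two contracted expressions by a chain of single short-braid moves (using that contracted expressions represent every commutativity class), then observes that a single braid move can alter the length of at most one cluster; since $N(w)=\sum_i n_i$ is invariant, no cluster's length---and hence no cluster's support---can change. No string diagrams are needed.

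Your proposal has a genuine gap at what you yourself flag as ``the hard part.'' You argue that the string-set of each cluster is intrinsic to $w$, but you never prove that the track-block $[\alpha,\beta]$ is determined by that string-set; the phrases ``granting this'' and ``I expect to argue it by picture'' are placeholders, not arguments. Without this step nothing follows, since a priori the same strings could be braided on a shifted block of columns in another contracted expression. (A smaller issue: your degree criterion for extreme strings fails when $k=1$.) The gap can be closed along your lines: by Lemma~\ref{l:unique} the generators $s_{m+1},\dots,s_{m+k+1}$ occur nowhere else in $\w$, so each middle string is the fixed value $m+j$ of $w$, while the two extreme values satisfy $L\le m+1$ and $R\ge m+k+2$; hence in every $[321]$ triple the median value is the middle string, the set of middle strings is exactly the set of medians of the cluster's $[321]$ triples, and $m$ is recovered intrinsically. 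But some such argument must actually be supplied; as submitted, the proof is incomplete.
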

\begin{proof}
By \cite[Lemma 4.11(i)]{losonczy}, the contracted reduced expressions form a
complete set of representatives for the commutativity classes of $w$.  Suppose
the generator $s_i$ supports a braid cluster in $\w$ in the sense of
Definition~\ref{d:supports} but there exists a contracted reduced expression
$\w' = a_0 c_1 a_1 \ldots c_M a_M$ for $w$ in which $s_i$ does not support a
braid cluster.  By the theorem of Matsumoto \cite{matsumoto} and Tits \cite{t},
it suffices to consider a pair of heaps represented by $\w$ and $\w'$ that are
related by a single short-braid move.

Observe that each short-braid move on $\w'$ can change the length of at most
one braid cluster.  This is clear if the short-braid move involves two entries
from a single braid cluster, or one entry from some braid cluster $c_i$
together with some entry of $a_j$ for $j \in \{i-1, i\}$.  In the case that
some $a_i$ is the identity, note that there are no short-braid moves involving
an entry from $c_i$ and an entry from $c_{i+1}$ by Lemma~\ref{l:unique}.

Therefore, by the equation $N(w) = \sum_{i = 1}^{M} n_i$ from
Definition~\ref{d:mc}, we actually have that no short-braid move on $\w'$ can
change the length of any of the braid clusters because $N(w)$ does not depend on
the reduced expression for $w$.  Thus we have shown that the length of each
braid cluster remains the same over all contracted reduced expressions for $w$.
Finally, since no single short-braid move can change the support of a
braid-cluster without changing its length, we have shown that the support of
each braid cluster also remains the same over all contracted reduced expressions
for $w$.
\end{proof}

Putting these lemmas together, we can show that there is a canonical heap
associated to any maximally clustered permutation.  See
Example~\ref{ex:canonical} for an illustration.

\begin{definition}\label{d:bc_column_decomposition}
Let $\mathfrak{C}$ be a commutativity class for a permutation $w$.
Suppose that the set of columns $\{1, \ldots, n\}$ of the heap of $\mathfrak{C}$ can be
partitioned into intervals
\[ \tilde{C}_0 = [1, p_1 -1], \tilde{B}_1 = [p_1, q_1], \tilde{C}_1 = [q_1+1,
p_2-1], \ldots, \tilde{B}_k = [p_k, q_k], \tilde{C}_k = [q_k + 1, n] \]
satisfying:
\begin{enumerate}
\item[(1)]  For each $i$, the heap of $\mathfrak{C}$ restricted to
$\tilde{B}_i$ is a braid cluster in the canonical form of
Lemma~\ref{l:braid_cluster}.
\item[(2)]  For each $i$, every minimal pair of entries in any column from
$\tilde{C}_i$ has a distinct resolution, possibly using entries from columns
$q_i$ or $p_{i+1}$.
\end{enumerate}
Then, we say that the heap of $\mathfrak{C}$ has a \em braid cluster column
decomposition \em given by $\tilde{C}_0, \tilde{B}_1, \tilde{C}_1, \ldots
\tilde{B}_k, \tilde{C}_k$.
\end{definition}

\begin{proposition}\label{p:canonical_form}
Every maximally clustered permutation $w$ has a unique commutativity class
$\mathfrak{C}$ such that the heap of $\mathfrak{C}$ has a braid cluster
column decomposition.  Conversely, every heap having a braid cluster column
decomposition corresponds to a maximally clustered permutation.
\end{proposition}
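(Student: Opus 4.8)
The plan is to pass back and forth between a maximally clustered permutation and its contracted reduced expressions, using three facts throughout: that the generators supporting a braid cluster are isolated (Lemma~\ref{l:unique}), that the support and length of each braid cluster are invariants of $w$ (Lemma~\ref{l:constant_support} and its proof), and that a fully commutative element has a unique heap. I will freely normalize each braid cluster to the canonical form of Lemma~\ref{l:braid_cluster}; this costs only braid moves performed inside the cluster, which keep the expression reduced and contracted by \cite[Lemma 4.3]{losonczy}. I would prove the existence half, the converse half, and uniqueness separately.

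For existence, I would start from a contracted reduced expression $\w = a_0 c_1 a_1 \cdots c_M a_M$, which exists by Definition~\ref{d:mc}, and normalize each $c_i$ to canonical form. By Lemma~\ref{l:braid_cluster} each $c_i$ then occupies a contiguous interval of columns, and by Lemma~\ref{l:unique} these intervals are pairwise disjoint and disjoint from every column appearing in the $a_j$. Sorting the intervals from left to right produces $\tilde B_1, \ldots, \tilde B_k$ and the complementary gaps $\tilde C_0, \ldots, \tilde C_k$, so that condition~(1) of Definition~\ref{d:bc_column_decomposition} holds by construction. For condition~(2) I would observe that every entry lying in a $\tilde C_i$-column comes from one of the short-braid-avoiding factors $a_j$, and then invoke the Lateral Convexity Lemma of \cite{b-w} to conclude that each minimal pair in such a column has a distinct resolution (allowing the boundary columns $q_i$ and $p_{i+1}$, as the definition permits). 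The commutativity class $\mathfrak{C}$ of $\w$ then carries a braid cluster column decomposition.

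For the converse, given a heap with decomposition $\tilde C_0, \tilde B_1, \ldots, \tilde B_k, \tilde C_k$, I would read a reduced word off the heap along a linear extension, collecting the entries of each $\tilde B_i$ into its canonical braid cluster $c_i$ and the entries of the $\tilde C_i$-columns into short-braid-avoiding words $a_i$; disjointness of the cluster columns lets me arrange the result as $a_0 c_1 a_1 \cdots c_k a_k$. To certify that this expression is contracted I must check the equality $N(w) = \sum_i n_i$ from Definition~\ref{d:mc}: condition~(2) makes the complementary region fully commutative, contributing no $[321]$ instances, each length-$(2n_i+1)$ canonical cluster is a transposition contributing exactly $n_i$ such instances, and the disjoint supports must be shown to forbid any $[321]$ instance straddling a cluster and the complement. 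Summing then yields the identity, so $w$ is maximally clustered.

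For uniqueness, suppose two commutativity classes $\mathfrak{C}$ and $\mathfrak{C}'$ of the same $w$ both admit decompositions. By Lemma~\ref{l:constant_support} and the invariance of each cluster's support and length established in its proof, the two partitions coincide, $\tilde B_i = \tilde B_i'$ and $\tilde C_i = \tilde C_i'$, and condition~(1) then forces the braid-cluster sub-heaps to agree, since the canonical form of a transposition is unique. What remains is to show that the entries in the $\tilde C$-columns sit identically in both heaps, and I expect this to be the main obstacle. The delicate point is that condition~(2) permits a minimal pair in a $\tilde C_i$-column to be resolved using the extremal cluster columns $q_i$ or $p_{i+1}$, so the complementary region is only fully commutative \emph{relative to its boundary} rather than in isolation; moreover the cross-boundary covering relations between a $\tilde C_i$-entry and an adjacent cluster entry are comparabilities forced by non-commutativity, and I must rule out the two heaps disagreeing there. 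I would resolve this by recording that a canonical cluster carries a single entry in its peak column and a controlled number of entries in its remaining columns, which pins down these cross-boundary relations, and then arguing that the heap on each $\tilde C_i$ together with its boundary entries is the unique heap of a fully commutative element. Once this is done the entire heap, and hence the commutativity class, is determined by $w$, giving uniqueness.
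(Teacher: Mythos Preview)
Your overall architecture matches the paper's, but there are two genuine gaps and one place where the paper's route is considerably shorter.

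\textbf{Condition~(2) in existence.} Invoking the Lateral Convexity Lemma on the individual factors $a_j$ does not establish~(2). A minimal pair $x,y$ in a $\tilde C_i$-column of the heap of $\w$ may have $x\in a_j$ and $y\in a_{j'}$ with $j\neq j'$ (with intervening braid clusters on disjoint columns), and Lateral Convexity for $a_j$ alone tells you nothing about the resolution of such a pair in the heap of $\w$. The paper does not use Lateral Convexity here; instead it argues directly that a minimal pair in a $\tilde C_i$-column lacking a distinct resolution would either be a short-braid or an extension of an adjacent braid cluster, contradicting the column partition (which was defined via the invariant set of cluster-supporting generators from Lemma~\ref{l:constant_support}). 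You need the contractedness equality $N(w)=\sum n_i$ to close this; full commutativity of each $a_j$ separately is not enough.

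\textbf{Reducedness in the converse.} Reading a linear extension off a heap with a braid-cluster column decomposition does not automatically give a reduced word; heaps are defined for arbitrary expressions. The paper proves reducedness explicitly: any braid move on such a heap must occur inside some $\tilde B_i$ (since every minimal pair in the $\tilde C$-columns has a distinct resolution, and this persists under braid moves confined to the $\tilde B_i$), and the braid clusters themselves are reduced, so no cancellation is possible. You should include this step.

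\textbf{Uniqueness.} Your direct comparison of two decompositions is workable, and you have correctly identified the boundary subtlety, but the paper bypasses it. Since contracted expressions represent every commutativity class (\cite[Lemma 4.11(i)]{losonczy}) and the set of braid clusters is the same in every contracted expression (Lemma~\ref{l:constant_support}), specifying a commutativity class of $w$ reduces to choosing a commutativity class for each cluster. Fixing each cluster in its canonical form then determines $\mathfrak{C}$ uniquely, with no separate analysis of the $\tilde C$-heap or its interface with the clusters required.
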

\begin{proof}
Let $w$ be a maximally clustered permutation.  By \cite[Lemma
4.11(i)]{losonczy}, the contracted reduced expressions form a complete set of
representatives for the commutativity classes of $w$.  By
Lemma~\ref{l:constant_support}, each generator either supports a unique braid
cluster in some contracted reduced expression, in which case the generator must
also support the braid cluster in every contracted reduced expression, or the
generator is not used in any braid cluster.  Hence, the set of braid clusters
appearing in any contracted reduced expression for $w$ is independent of the
choice of reduced expression.  Therefore, specifying a commutativity class of
$w$ depends only on a choice of commutativity class for each braid cluster.  We
choose each braid cluster to have the form given in Lemma~\ref{l:braid_cluster}
and declare this to be our canonical commutativity class $\mathfrak{C}$ for $w$.

Consider the heap associated to $\mathfrak{C}$.  We can partition the support of
$w$ into segments $\tilde{B}_1, \tilde{B}_2, \ldots, \tilde{B}_k$ corresponding
to the generators that support braid clusters.  The complementary columns $\{1,
\ldots, n\} \setminus \bigcup_{i=1}^k \tilde{B}_i$ form intervals that we denote
$\tilde{C}_0, \tilde{C}_1, \ldots, \tilde{C}_k$.

By definition, this column partition satisfies property (1) of
Definition~\ref{d:bc_column_decomposition}.  Any minimal pair of entries from
some $\tilde{C}_i$ without a distinct resolution would either form a
short-braid or form an extension of an existing braid-cluster, contradicting
the definition of the column partition we have given.  Hence, the column
partition satisfies property (2) of
Definition~\ref{d:bc_column_decomposition} as well.

To prove the converse statement, fix a heap with a column partition satisfying
properties (1) and (2) and let $\w$ be a corresponding expression.  We show that
$\w$ is reduced and maximally clustered.

If $\w$ is not reduced then there exists a sequence of braid moves bringing $\w$
to an expression whose heap has two entries in the same column with no entries
between them.  But the available braid moves in any expression related to $\w$
only involve entries from some braid cluster supported on columns
$\tilde{B}_i$ because the entries in the complimentary columns $\union_{i = 0}^k
\tilde{C}_i$ all have distinct resolutions by property (2) and this remains true
after performing arbitrarily many braid moves in $\union_{i=1}^k \tilde{B}_i$.
Since the braid clusters supported on each $\tilde{B}_i$ are reduced, we never
have any opportunity for cancellation.  Hence, $\w$ is reduced.

Moreover, we can read a contracted reduced expression for $w$ from the given
heap.  We begin by linearly ordering the braid clusters
$\tilde{B}_1, \dots, \tilde{B}_k$ in a way which is compatible with the
order in which they appear in the heap poset.  First read all of the entries in
columns $\tilde{C} = \union_{i = 0}^k \tilde{C}_{i}$
that lie below the entries of $\tilde{B}_1$ in the heap poset.  Then for
each $i < k$ we sequentially read the entries of $\tilde{B}_i$ followed by
the entries from columns $\tilde{C}$ appearing between $\tilde{B}_i$ and
$\tilde{B}_{i+1}$ in the heap poset.  Finally, we read the entries from
columns $\tilde{C}$ that appear above $\tilde{B}_{k}$ in the heap poset.

\begin{figure}[h]
\begin{center}
\begin{tabular}{c}
\heap {
& & \StringLRX{\hb} & & \StringLR{\hs} & & \StringR{\hs}\\
& \StringR{\hs} & & \StringLRX{\hb} & & \StringL{\hs} & & \StringL{\hs}\\
& & \StringLR{\hs} & & \StringLRX{\hb} & & \StringRXD{\hf}\\
& \StringR{\hs} & & \StringLR{\hs} & & \StringLRX{\hb} & & {\hf} \\
& & \StringLR{\hs} & & \StringLRX{\hb} & & \StringLXD{\hf}\\
& \StringR{\hs} & & \StringLRX{\hb} & & \StringL{\hs} & & \StringL{\hs}\\
& & \StringLRX{\hb} & & \StringLR{\hs} & & \StringR{\hs}\\
& \StringRX{\hf} & & \StringLR{\hs} & & \StringL{\hs} & & \StringL{\hs}\\
} \\
\end{tabular}
\end{center}
\caption{Strings on a braid cluster}
\label{f:overlay}
\end{figure}

Since the braid clusters have the canonical form of Lemma~\ref{l:braid_cluster},
we can verify that each braid cluster $\tilde{B}_i$ supported on $n_i + 1$
columns has $n_i$ $[321]$-instances using a string diagram overlaid on the heap,
as illustrated in Figure~\ref{f:overlay}.  By Lemma~\ref{l:unique} each braid
cluster $\tilde{B}_{i}$ takes exactly one string lying next to a column in
$\tilde{C}_{i}$ and crosses it with exactly one string lying next to a column in
$\tilde{C}_{i+1}$.  Since each minimal pair of entries from any column of
$\tilde{C}_{i}$ has a distinct resolution by property (2), there are no other
$[321]$-instances in $w$.  Therefore, the number of $[321]$-instances in $w$ is
exactly $\sum_{i=0}^{k-1} n_i$, and each braid cluster $\tilde{B}_{i}$ has
length $2 n_i + 1$.  Hence, $w$ is maximally clustered by Definition~\ref{d:mc}.
\end{proof}

\begin{example}\label{ex:canonical}
Suppose $w$ is given by the contracted reduced expression
\[ (s_5) (s_1 s_2 s_3 s_4 s_3 s_2 s_1) (s_6 s_5 s_9) (s_7 s_8 s_7) (s_6).\]
Then the heap of $w$ is drawn below, together with its braid cluster column
decomposition.  The braid clusters are shown in grey.
{\scriptsize
\[ \xymatrix @=-8pt @! {
& & {\hs} & & {\hs} & & {\hf} & & {\hs} & \\
& {\hb} & & {\hs} & & {\hs} & & {\hb} \\
& & {\hb} & & {\hs} & & {\hs} & & {\hb} & \\
& {\hs} & & {\hb} & & {\hf} & & {\hb} & & {\hf} \\
& & {\hs} & & {\hb} & & {\hf} & & {\hs} & \\
& {\hs} & & {\hb} & & {\hf} & & {\hs} & \\
& & {\hb} & & {\hs} & & {\hs} & & {\hs} \\
& {\hb} & & {\hs} & & {\hs} & & {\hs} & \\
& & {\hs} & & {\hs} & & {\hs} & & {\hs} \\
& \text{[} \tilde{B}_1 & & & \text{]} & \text{[} \tilde{C}_1 & \text{]} &
\text{[} \tilde{B}_2  & \text{]} & \text{[} \tilde{C}_3 \text{]} \\ } \] }
\end{example}


\bigskip
\section{Enumeration}\label{s:enumeration}

Let $S^{P} = \bigcup_{n \geq 1} S^{P}_{n}$ denote the permutations characterized
by avoiding a set of 1-line patterns $P$.  The most important pattern classes
for this work are the maximally clustered permutations and the freely braided
permutations, characterized by avoiding the patterns from \eqref{e:mc.patterns}
and \eqref{e:fb.patterns}, respectively.  Given a finite set $H$ of
permutations, let $S^{P}(H)$ be the subset of $S^{P}$ consisting of those
permutations that heap-avoid the patterns in $H$.

In \cite[Section 4]{j2} we described how to find a set of 1-line patterns $Q$
such that $S^{P}(\{ h \}) = S^{Q}$, when possible.  Example 11.1 of \cite{b-j1}
shows that some heap patterns $h$ have no such set $Q$.

Let $r(h)$ denote the rank of the symmetric group containing $h$.  Define
$U^{P}(h)$ to be the set of all elements in $S^{P}_{r(h)}$ that heap-contain
$h$.  This is a finite set because the rank is fixed.

\begin{definition}{\bf \cite[Definition 4.2]{j2}}\label{d:ideal}
Let $p \in S^{P}$.  Then, we say that $p$ is an \em ideal pattern in $S^{P}$ \em
if for every $q \in S^{P}_{r(p)+1}$ containing $p$ as a 1-line pattern, we have
that $q$ heap-contains $p$.
\end{definition}

For example, \cite[Theorem 3.8]{t2} implies that $p$ is an ideal pattern in
$S^{\emptyset}$ if $p$ avoids $[2143]$.  Definition~\ref{d:ideal} describes a
finite test that extends to permutations of all ranks according to the following
result.

\begin{theorem}{\bf \cite[Theorem 4.4]{j2}}\label{t:translate_olh}
Suppose $S^{P}(H)$ is the subset of permutations characterized by avoiding a
finite set $P$ of 1-line patterns and heap-avoiding a finite set $H$ of
permutations.  If each of the elements in $P' = \bigcup_{h \in H} U^{P}(h)$ is
an ideal pattern, then $S^{P}(H) = S^{P \union P'}$, so is characterized by
avoiding the permutations in $P \union P'$ as 1-line patterns.
\end{theorem}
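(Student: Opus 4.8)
The plan is to prove the set equality by fixing an arbitrary $w \in S^P$ and establishing the biconditional that $w$ heap-contains some $h \in H$ if and only if $w$ contains some $p \in P'$ as a 1-line pattern. Granting this, the theorem is immediate: $S^P(H)$ is the set of $w \in S^P$ that heap-avoid every $h \in H$, which by the contrapositive is exactly the set of $w \in S^P$ that avoid every $p \in P'$ as a 1-line pattern, namely $S^{P \union P'}$. The hypothesis that the elements of $P'$ are ideal enters only in one of the two directions.

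First I would dispatch the direction that needs no hypothesis: if $w$ heap-contains $h$ then $w$ contains some $p \in P'$ as a 1-line pattern. By definition there is an orientation-preserving Coxeter embedding $f$ and a convex labeled subposet $K$ of the heap of $w$ isomorphic to the heap of $f(\h)$. Overlaying strings on $w$ and keeping only the $r(h)$ strings that cross inside $K$ (adding non-crossing strings when $h$ does not use all of its columns) produces a sub-permutation $w'$ of rank $r(h)$. Since avoidance of $P$ is hereditary under taking 1-line sub-patterns, $w' \in S^P_{r(h)}$; and since any betweenness relation holding in the smaller heap of $w'$ already holds in the heap of $w$, convexity of $K$ is inherited and $w'$ still heap-contains $h$. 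Thus $w' \in U^P(h) \subseteq P'$, and $w$ contains $w'$ as a 1-line pattern, as required.

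For the reverse direction I would combine transitivity of heap-containment with the following lifting statement, which is the real content of the argument: \emph{if $p$ is an ideal pattern in $S^P$ and $w \in S^P$ contains $p$ as a 1-line pattern, then $w$ heap-contains $p$.} Transitivity holds because orientation-preserving Coxeter embeddings compose and a convex subposet of a convex subposet is again convex. Since every $p \in U^P(h)$ heap-contains $h$ by the definition of $U^P(h)$, the lifting statement yields the claim: if $w$ contains such a $p$ as a 1-line pattern then $w$ heap-contains $p$, hence $w$ heap-contains $h$. Taking the contrapositive gives $S^P(H) \subseteq S^{P \union P'}$.

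It remains to prove the lifting statement, and this is where I expect the main obstacle to lie. I would argue by induction on $r(w)$, with the base case $r(w) = r(p)$ forcing $w = p$. The crucial point is a descent lemma: if $w \in S^P$ contains $p$ but fails to heap-contain $p$, then this failure is already witnessed by some 1-line sub-pattern of $w$ of rank exactly $r(p)+1$ that lies in $S^P$, contains $p$, and does not heap-contain $p$. Definition~\ref{d:ideal} says precisely that no such rank-$(r(p)+1)$ witness exists when $p$ is ideal, so the descent lemma rules out any failure and proves the lifting statement. The hard part is the descent lemma itself: because heap-containment asks only for the existence of \emph{some} convex sub-heap isomorphic to the heap of $p$, rather than one tied to a fixed pattern instance, showing that an obstruction at high rank must already appear at rank $r(p)+1$ requires a careful analysis of string diagrams and full use of the freedom to choose a commutativity class of $w$; the remainder of the argument is routine bookkeeping with convexity and transitivity.
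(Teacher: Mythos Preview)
The paper does not actually prove this theorem: it is quoted as \cite[Theorem~4.4]{j2} and used as a black box, with no \texttt{proof} environment following the statement. So there is no ``paper's own proof'' to compare your proposal against.

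Evaluating your outline on its own merits: the reduction to the biconditional for $w \in S^P$ is correct, and the transitivity-plus-lifting structure for the direction $S^{P\cup P'} \subseteq S^P(H)$ is the right shape. However, you have not given a proof; you have given an outline that locates the difficulty and then stops. Two specific points deserve attention. First, in the direction ``$w$ heap-contains $h$ $\Rightarrow$ $w$ contains some $p\in P'$,'' the assertion that exactly $r(h)$ strings of $w$ cross inside the convex subposet $K$ is not automatic: convexity of $K$ in the heap partial order is what forces a string that enters the region of $K$ to behave as the corresponding string of $h$ would, but this needs to be argued rather than asserted. Second, and more seriously, your ``descent lemma'' is the entire content of the reverse direction, and you explicitly leave it unproved. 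Saying you would argue ``by induction on $r(w)$'' and then stating a lemma that jumps straight to rank $r(p)+1$ is not an inductive argument; the inductive step would be to show that a rank-$n$ failure produces a rank-$(n-1)$ failure, and that step (deleting one entry from $w$ while preserving both ``contains $p$'' and ``does not heap-contain $p$'') is exactly the nontrivial point. As written, the proposal identifies the crux but does not resolve it.
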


We can apply Theorem~\ref{t:translate_olh} to study certain classical
permutation pattern classes using heap-avoidance.  For example, it is
straightforward to verify that
\[ S^{\{[321]\}}(\{ s_5 s_6 s_7 s_3 s_4 s_5 s_6 s_2 s_3 s_4 s_5 s_1 s_2 s_3 \}) =
S^{\{[321], [46718235], [46781235], [56718234], [56781234]\}} \]
using Theorem~\ref{t:translate_olh}.  Similar statements \cite[Corollary
4.5]{j2} hold for the freely braided and maximally clustered permutations.
The permutations that heap-avoid
\[ [46718235] = s_5 s_6 s_7 s_3 s_4 s_5 s_6 s_2 s_3 s_4 s_5 s_1 s_2 s_3 \]
are called \em hexagon-avoiding \em after \cite{b-w}.

Let $H$ be a finite set of connected fully commutative permutations each of
whose heaps contains at least two entries in each internal column in the sense
of Definition~\ref{d:supports} or let $H = \emptyset$.  Suppose $F_n$ is the
subset of the fully commutative permutations on $n$ generators that are
characterized by heap-avoiding the set of patterns from $H$, so $F_n =
S_{n+1}^{[321]}(H) \subset S_{n+1}^{[321]}$.  We define $|F_0|$ to be 1,
corresponding to the empty heap.  Our main result in this section is that we
can transform the generating function $F(x) = \sum_{n \geq 0} |F_n| \ x^n$ to
obtain generating functions for the corresponding freely braided and maximally
clustered pattern classes.  Moreover, the transformation is a rational function
of $F(x)$, so it preserves this important property of the generating function.

We first consider the number of permutations in $F_n$ that have one or both of
the extremal generators $\{ s_1, s_n \}$ present in the heap.  Recall that every
minimal pair of entries in a fully commutative heap must have a distinct
resolution, so in particular any fully commutative heap has at most a single
entry in its leftmost and rightmost columns.  Let $L_n$ be the permutations from
$F_n$ that have $s_n$ present in their heap, and $R_n$ be the set of
permutations from $F_n$ having $s_1$ present in the heap.  By the bijection that
reverses the ordering of the subscripts of the generators, we find that $|L_n| =
|R_n|$.  Let $M_n$ denote the set of permutations in $F_n$ with both $s_1$ and
$s_n$ present in the heap.  Then, we have the following enumerative result.

\begin{lemma}\label{l:pieces}
Let $H$ be a finite set of fully commutative permutations or let $H =
\emptyset$.  Suppose
\[ F(x) = \sum_{n \geq 0} |S_{n+1}^{[321]}(H)| \ x^n. \]
Then,
\[ L(x) = \sum_{n \geq 0} |L_n| \ x^n = F(x) - x F(x) - 1 \]
and
\[ M(x) = \sum_{n \geq 0} |M_n| \ x^n = F(x) - 2 x F(x) + x^2 F(x) - 1. \]
\end{lemma}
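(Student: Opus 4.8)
The plan is to prove both identities by sorting the permutations of $F_n$ according to which of the two extremal generators $s_1$ and $s_n$ occur in their heaps, and to recognize those permutations that omit an extremal generator as copies of a smaller class. Throughout I would work with heaps rather than one-line notation, using the fact from \cite{b-j-s} that a permutation in $S_{n+1}$ is fully commutative exactly when it avoids $[321]$, so that every element of $F_n$ is determined by a single heap supported on columns $\{1, \dots, n\}$ that heap-avoids $H$. The key structural observation is that if $s_n \notin supp(w)$ then the heap of $w$ is supported on columns $\{1, \dots, n-1\}$, and the \emph{same} labeled heap represents an element of $F_{n-1}$; since heap-containment of a pattern in $H$ depends only on this labeled heap, membership in the class is preserved, so $\{w \in F_n : s_n \notin supp(w)\}$ is identified with $F_{n-1}$ under the parabolic inclusion $\langle s_1, \dots, s_{n-1}\rangle \hookrightarrow \langle s_1, \dots, s_n\rangle$. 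For the absence of $s_1$ one instead shifts every column down by one, which is an orientation-preserving Coxeter isomorphism from $\langle s_2, \dots, s_n\rangle$ onto $\langle s_1, \dots, s_{n-1}\rangle$; because heap-containment is defined up to such embeddings, this again yields a bijection onto $F_{n-1}$ preserving heap-avoidance of $H$.

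For the first identity, $L_n$ is by definition the complement inside $F_n$ of the set of permutations omitting $s_n$. By the observation above this set has size $|F_{n-1}|$, so $|L_n| = |F_n| - |F_{n-1}|$ for every $n \geq 1$, while $|L_0| = 0$ since there is no generator $s_0$. Summing the series and reindexing gives $L(x) = \sum_{n \geq 1}(|F_n| - |F_{n-1}|)x^n = (F(x) - |F_0|) - xF(x)$, and substituting $|F_0| = 1$ yields $L(x) = F(x) - xF(x) - 1$. The constant term $|F_0| - 1 = 0$ correctly records $|L_0| = 0$, so the ``$-1$'' is precisely the correction for the empty heap.

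For the second identity I would apply inclusion--exclusion to the events ``$s_1$ present'' and ``$s_n$ present.'' The three complementary counts are $|\{s_n \text{ absent}\}| = |F_{n-1}|$, $|\{s_1 \text{ absent}\}| = |F_{n-1}|$ via the column-shift bijection, and $|\{s_1, s_n \text{ both absent}\}| = |F_{n-2}|$ for $n \geq 2$, where the support lies in $\{s_2, \dots, s_{n-1}\}$. Hence $|M_n| = |F_n| - 2|F_{n-1}| + |F_{n-2}|$ for $n \geq 2$, while $|M_0| = |M_1| = 0$, as there is only one extremal column available when $n \leq 1$. Assembling the generating function gives $M(x) = \sum_{n \geq 2}(|F_n| - 2|F_{n-1}| + |F_{n-2}|)x^n = (1-x)^2 F(x) - 1 - (|F_1| - 2|F_0|)x$. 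Since $F_1 = \{e, s_1\}$ we have $|F_1| = 2 = 2|F_0|$, so the linear correction vanishes and $M(x) = (1-x)^2 F(x) - 1 = F(x) - 2xF(x) + x^2 F(x) - 1$.

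I expect the main obstacle to be verifying that each of these bijections genuinely respects all three defining conditions of $F_n$ --- full commutativity, $[321]$-avoidance, and heap-avoidance of the fixed set $H$ --- and not merely the support condition; working entirely with heaps and invoking invariance of heap-containment under orientation-preserving Coxeter embeddings is what makes this transparent. The remaining delicate point is the bookkeeping at the boundary: one must check separately that $|M_1| = 0$ (the one-column case offers no second extremal generator, which is why the count and the generating function agree only under this reading) and that the identity $|F_1| = 2|F_0|$ is exactly what forces the linear coefficient of $(1-x)^2 F(x)$ to vanish, leaving only the constant-term correction $-1$ in each formula.
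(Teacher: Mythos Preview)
Your proposal is correct and follows essentially the same approach as the paper, which proves the lemma in one sentence by invoking inclusion--exclusion together with the initial conditions $|F_0|=1$, $|F_1|=2$ forcing $|L_0|=|M_0|=|M_1|=0$. You have simply made explicit the bijections underlying the inclusion--exclusion (parabolic inclusion and column shift) and the verification that these preserve heap-avoidance of $H$, details that the paper leaves to the reader.
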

\begin{proof}
This follows using inclusion-exclusion together with the observation that $|F_0|
= 1$ and $|F_1| = 2$ so $|L_0| = |M_0| = |M_1| = 0$, which is required by
definition.
\end{proof}

We are now in a position to prove our main enumerative theorem.

\begin{theorem}\label{t:enumerate}
Let $H$ be a finite set of connected fully commutative permutations each of
whose heaps contains at least two entries in each internal column or let $H =
\emptyset$.  Suppose
\[ F(x) = \sum_{n \geq 0} |S_{n+1}^{[321]}(H)| \ x^n, \]
\[ L(x) = F(x) - x F(x) - 1, \text{ and } M(x) = F(x) - 2 x F(x) + x^2 F(x) - 1. \]
Then, we have
\[ \sum_{n \geq 0} |S_{n+1}^{\{[3421], [4231], [4312], [4321]\}}(H)| \ x^n =
F(x) + {{L(x)^2} \over {1 - M(x)}} \]
and
\[ \sum_{n \geq 0} |S_{n+1}^{\{[3421], [4312], [4321]\}}(H)| \ x^n = F(x) + {
{L(x)^2} \over {1 - x - M(x)}}. \]
\end{theorem}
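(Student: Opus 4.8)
The plan is to read off each maximally clustered permutation from its canonical heap via Proposition~\ref{p:canonical_form} and translate the resulting braid cluster column decomposition into the generating function identity. Throughout I work with the maximally clustered class $S^{\{[3421], [4312], [4321]\}}(H)$ (the final statement); the freely braided case runs identically once one extra constraint is noted. By Proposition~\ref{p:canonical_form}, every maximally clustered $w \in S_{n+1}$ has a unique canonical commutativity class whose heap carries a decomposition $\tilde C_0, \tilde B_1, \tilde C_1, \ldots, \tilde B_k, \tilde C_k$ as in Definition~\ref{d:bc_column_decomposition}, each $\tilde B_i$ being a braid cluster in the canonical form of Lemma~\ref{l:braid_cluster} with its single-entry peak column on the right. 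The case $k = 0$ consists precisely of the fully commutative permutations avoiding $H$, which contribute the term $F(x)$. For $k \geq 1$ I read the heap left to right as $k$ braid clusters interleaved with the fully commutative pieces $\tilde C_i$, where $\tilde C_0$ and $\tilde C_k$ are \emph{end} pieces (each adjacent to a braid cluster on one side only) and $\tilde C_1, \ldots, \tilde C_{k-1}$ are \emph{middle} pieces (adjacent to braid clusters on both sides); by Lemma~\ref{l:unique} the generators supporting distinct braid clusters are disjoint from one another and from every $\tilde C_i$.

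Next I would localize the heap-avoidance of $H$ to the individual pieces, and this is where the hypothesis on $H$ is used. Since each $h \in H$ is connected and the Coxeter embedding is orientation preserving, any copy of $h$ in $w$ occupies a consecutive interval of columns. I would argue that this interval cannot meet a braid cluster in an internal column of the copy: a peak column carries a single entry (and its generator appears nowhere else, by Lemma~\ref{l:unique}), so it cannot serve as an internal column of $h$, which by hypothesis needs at least two entries there; and the two entries in any non-peak column of a braid cluster form a minimal pair whose resolution is not distinct, so by convexity a copy of $h$ using both would contain that non-distinct resolution, contradicting full commutativity of $h$ via the Lateral Convexity Lemma. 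Hence every copy of $h$ lies inside a single piece (possibly borrowing an adjacent braid cluster column as one of its extremal columns), so $w$ heap-avoids $H$ if and only if each piece does. Together with the distinct-resolution condition this identifies each piece, with its braid-facing extremal column marked present, as an element counted by Lemma~\ref{l:pieces}: an end piece has one marked extremal column, hence is counted by $L(x)$, and a middle piece has both marked, hence by $M(x)$.

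Finally I would assemble the generating function while tracking the number of columns $n$. The key bookkeeping claim is that each braid cluster shares its two extremal columns with its neighbours — its peak column becomes the left extremal column of the piece on its right, and its leftmost column supplies the extremal entry of the piece on its left — so that a braid cluster occupying $b \geq 2$ columns contributes only its $b-2$ interior columns freely, giving a factor $\sum_{b \geq 2} x^{b-2} = 1/(1-x)$. With $k$ braid clusters, two end pieces giving $L(x)^2$, and $k-1$ middle pieces giving $M(x)^{k-1}$, summation over $k \geq 1$ yields
\[ F(x) + \sum_{k \geq 1} L(x)^2 \left( \frac{1}{1-x} \right)^{k} M(x)^{k-1} = F(x) + \frac{L(x)^2/(1-x)}{1 - M(x)/(1-x)} = F(x) + \frac{L(x)^2}{1 - x - M(x)}, \]
as claimed. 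For the freely braided class one avoids $[4231]$ as well; since a braid cluster on $b \geq 3$ columns realizes the pattern $[4231]$ (for instance $s_1 s_2 s_3 s_2 s_1 = [4231]$) whereas a short-braid does not, Definition~\ref{d:mc} forces every braid cluster to be a short-braid with $b = 2$. Each cluster then contributes the factor $x^{0} = 1$ in place of $1/(1-x)$, and the same summation gives $F(x) + L(x)^2/(1 - M(x))$.

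The step I expect to be the main obstacle is the column bookkeeping of the third paragraph: making precise the sense in which exactly two columns per braid cluster are absorbed into the neighbouring $L$ and $M$ factors. The canonical braid cluster is asymmetric — its peak column carries a single entry on the right while its leftmost column carries two entries — so this absorption is \emph{not} a partition of the heap into literal convex sub-heaps; already the single-braid case $[321] = s_1 s_2 s_1$, which must be counted by the $x^2$ coefficient of $L(x)^2/(1-x-M(x))$, shows that the entry counts do not match across the cut. I anticipate that this identification must instead be justified through the string-diagram description used in the proof of Proposition~\ref{p:canonical_form}, verifying both that the boundary conditions produced at each cut are exactly those defining $L_n$ and $M_n$ in Lemma~\ref{l:pieces}, and that the interior columns of each cluster may be chosen freely and independently of the fully commutative pieces so as to produce the factor $1/(1-x)$.
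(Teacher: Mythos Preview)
Your approach is essentially the paper's, and you have correctly located the only nontrivial step. The paper does not resolve the bookkeeping through string diagrams; instead it reindexes the column intervals so that each $C_i$ absorbs \emph{both} extremal columns of the adjacent braid clusters (so the paper's $B_i$ consist only of the internal braid-cluster columns), and then defines an explicit map $\pi$ on each $C_i$ that collapses the two entries in its rightmost column---the left end of the next braid cluster---down to a single entry. This $\pi(C_i)$ is then a bona fide fully commutative heap in $L_{|C_i|}$, $M_{|C_i|}$, or $R_{|C_i|}$, and the inverse (expand the rightmost entry back to a pair, then glue in the canonical braid fragments on the $B_i$) gives the bijection. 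Your $[321]=s_1 s_2 s_1$ test case is handled by $C_0=\{1\}$, $B_1=\emptyset$, $C_1=\{2\}$, with $\pi(C_0)=s_1$ and $\pi(C_1)=s_1$ each contributing an $x$ to $L(x)^2$.

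One subtlety you do not mention: to verify that $\pi(C_i)$ itself heap-avoids $H$ (not merely that the restriction of $w$ to $C_i$ does), the paper switches to the commutativity class of $w$ in which the $(i{+}1)$st braid cluster is written in the reversed form $s_{m+k+1}\cdots s_m\cdots s_{m+k+1}$; in that class the columns $C_i$ literally carry the heap of $\pi(C_i)$, so a heap-instance of $h$ in $\pi(C_i)$ would sit inside $w$.
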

\begin{proof}
Suppose $w$ is an element of $S_{n+1}^{\{[3421], [4312], [4321]\}}(H)$.  By
Proposition~\ref{p:canonical_form}, we may choose a commutativity class $\mathfrak{C}$ of
$w$ so that the
heap of $\mathfrak{C}$ has a specific form where each column either supports a braid
cluster, in which case there are no other generators in that column, or else
the column is not used in any braid cluster.

Let $k$ be the number of braid clusters in $w$.  We partition the columns $\{ 1,
\dots, n \}$ of the heap of $\mathfrak{C}$ into intervals $C_0, B_1, C_1, B_2, \dots, B_k,
C_k$ based on the location of the braid clusters in $w$.  We define the $r$th
interval $C_r$ to be $[p,q]$ where $p$ is the rightmost column of the $r$th
braid cluster and $q$ is the leftmost column of the $(r+1)$st braid cluster,
setting $p=1$ for $r=0$ and $q=n$ for $r=k$.  The $B_i$ then consist of the
internal columns of the $i$th braid cluster.  In particular, if $w$ is
freely braided then every $B_i = \emptyset$.  If $w$ is fully commutative, then
$C_0 = \{ 1, \dots, n \}$.

For example, suppose $w$ is given by the contracted reduced expression
\[ (s_5) (s_1 s_2 s_3 s_4 s_3 s_2 s_1) (s_6 s_5 s_9) (s_7
s_8 s_7) (s_6).\]
Then the heap of $w$ is drawn below, together with its column partition.  The
braid clusters are shown in grey.

{\scriptsize
\[ \xymatrix @=-4pt @! {
& {\hs} & & {\hs} & & {\hf} & & {\hs} & \\
{\hb} & & {\hs} & & {\hs} & & {\hb} \\
& {\hb} & & {\hs} & & {\hs} & & {\hb} & \\
{\hs} & & {\hb} & & {\hf} & & {\hb} & & {\hf} \\
& {\hs} & & {\hb} & & {\hf} & & {\hs} & \\
{\hs} & & {\hb} & & {\hf} & & {\hs} & \\
& {\hb} & & {\hs} & & {\hs} & & {\hs} \\
{\hb} & & {\hs} & & {\hs} & & {\hs} & \\
C_0 \text{]} & \text{[} B_1 & \text{]} & \text{[} C_1 & & &
\text{]} & \text{[} C_2 \\
} \] }

Observe that the entries appearing in columns $B_i$ are completely determined by
the positions of the entries from the rightmost column in $C_{i-1}$ and the
leftmost column in $C_{i}$, as these are the ends of the braid cluster supported
by $B_i$.

Next, we define a map $\pi : \{C_0, C_1, \dots, C_k \} \rightarrow S_{n+1}$ to
project each interval of columns to a permutation.  If $k = 0$, then $\pi(C_0) =
C_0$.  Otherwise, observe that the leftmost column in each $C_i$ for $i > 0$ is
the rightmost column of some braid cluster, so it consists of a single entry.
The rightmost column $q$ in each $C_i$ for $i < k$ is the leftmost column of
some braid cluster, so it consists of two entries, and there can be no entries
between them in column $q-1$ by Definition~\ref{d:mc}.  Hence, we may define a
permutation $\pi(C_i)$, whose heap is obtained from the heap of $w$ restricted to
the columns of $C_i$, by collapsing the two entries in column $q$ to a single
entry.

In the example above, $\pi(C_1)$ has the heap
{\scriptsize \[ \xymatrix @=-4pt @! {
{\hs} & & {\hf} & \\
& {\hf} & & {\hb} \\
{\hb} & & {\hf} & \\
& {\hf} & & {\hs} \\
\text{[} C_1 & & & \text{]} \\
} \] }

Next, we claim that for each $i$,
\begin{equation}\label{e:pieces}
\pi(C_i) \in
\begin{cases}
F_{|C_0|} & \text{ if $k = 0$, } \\
L_{|C_0|} & \text{ if $k > 0$ and $i = 0$, } \\
M_{|C_i|} & \text{ if $k > 0$ and $1 \leq i < k$, } \\
R_{|C_k|} & \text{ if $k > 0$ and $i = k$. }
\end{cases}
\end{equation}
In particular, we show that each $\pi(C_i)$ corresponds to a fully commutative
permutation that heap-avoids the patterns from $H$.

The claim is clear if $k = 0$, so suppose $k > 0$, and let $C_i = [p,q]$ with
$0 < i < k$.  Then $\pi(C_i)$ is fully commutative because every minimal pair
of entries in $\pi(C_i)$ has a distinct resolution by property (2) in
Definition~\ref{d:bc_column_decomposition} from
Proposition~\ref{p:canonical_form}.

Next, suppose there exists an instance of a heap-pattern $h \in H$ among the
entries of $\pi(C_i)$.  Then we must have such a heap-instance in $w$ which is a
contradiction.  This follows because the heap of $w$ with respect to the
commutativity class where the $(i+1)$st braid cluster is of the form
\[ s_{m+k+1} s_{m+k} \dots s_{m+1} s_{m} s_{m+1} \dots s_{m+k} s_{m+k+1} \]
has precisely the entries of $\pi(C_i)$ in columns $C_i$, and so $w$
heap-contains $h$.  As this cannot occur, each $\pi(C_i)$ heap-avoids the
patterns from $H$.

It follows from the construction that if $k > 0$ then $\pi(C_0)$ has an entry in
its rightmost column, $\pi(C_k)$ has an entry in its leftmost column, and every
other $\pi(C_i)$ has entries in both the leftmost and rightmost columns.
Otherwise, $k = 0$, and $\pi(C_0) = C_0 \in F_{|C_0|}$.  Thus, we have proved
(\ref{e:pieces}).

\begin{figure}[h]
\begin{center}
\begin{tabular}{c}
\xymatrix @=-4pt @! {
{\hb} & & {\hs} & & \\
& {\hb} & & {\hs} & & \\
{\hs} & & {\hb} & & \\
& {\hs} & & {\hb} & & \\
{\hs} & & {\hs} & & \\
& {\hs} & & {\hb} & & \\
{\hs} & & {\hb} & & \\
& {\hb} & & {\hs} & & \\
{\hb} & & {\hs} & & \\
} \\
\end{tabular}
\end{center}
\caption{Internal columns of a braid cluster}
\label{f:internal_heap}
\end{figure}

Conversely, suppose there exists a fixed a column partition $\{ C_0, B_1, C_1, \ldots, B_k,
C_k \}$ as in the beginning of the proof and we are given either a single heap
$c^{(0)} \in F_{|C_0|}$ if $k=0$ or a sequence of heaps
\begin{equation}\label{e:sequences}
c^{(0)}, b^{(1)}, c^{(1)}, b^{(2)}, \dots, b^{(k)}, c^{(k)}
\end{equation}
where $k > 0$, $c^{(0)} \in L_{|C_0|}$, $c^{(k)} \in R_{|C_k|}$, every other
$c^{(i)} \in M_{|C_i|}$ and each $b^{(i)}$ is a heap fragment on $|B_i|$ columns
with the canonical form shown in Figure~\ref{f:internal_heap}.  Then, we may
form a maximally clustered permutation $w$ with $k$ braid clusters as follows.  If
$k = 0$, take $w$ to be the permutation whose heap is $c^{(0)}$.  Otherwise, for
each $i < k$ we expand the single entry in the rightmost column of $c^{(i)}$ to
a pair of entries.  This has the effect of reversing the $\pi$ map on each
$c^{(i)}$, which we denote by $\pi^{-1}(c^{(i)})$.  Then, glue the columns
together in order to form a single heap, so that the rightmost pair of entries
in each $\pi^{-1}(c^{(i)})$ surrounds the leftmost pair of entries in
$b^{(i+1)}$ if any, or the unique leftmost entry in $\pi^{-1}(c^{(i+1)})$ if
$b^{(i+1)}$ is empty.  Also, each $b^{(i)}$ is glued to $\pi^{-1}(c^{(i)})$ so
that the pair of entries in the rightmost column of $b^{(i)}$ surrounds the
unique entry in the leftmost column of $\pi^{-1}(c^{(i)})$.  We call the
permutation to which this heap corresponds $w$.

Every minimal pair of entries $x$ and $y$ from an internal column of
$\pi^{-1}(c^{(i)})$ must have a distinct resolution because $c^{(i)}$ is
fully commutative.  Hence, we observe that the heap we constructed is reduced
and maximally clustered because it has the form given in
Proposition~\ref{p:canonical_form}.

Next, observe that if $w$ heap-contains a pattern $h \in H$ then the pattern
instance must lie in one of the $c^{(i)}$ To see this, let $x$ and $y$ be a
minimal pair of entries lying in column $i \in [2, n-1]$ of $h$.  Because $h$
is fully commutative, $x$ and $y$ have a distinct resolution, so there exist
entries from columns $i+1$ and $i-1$ lying strictly between $x$ and $y$ in the
heap poset.  However, no column in the heap of any commutativity class for a
braid cluster has this property by the uniqueness statement in
Definition~\ref{d:mc}, since each commutativity class for a braid cluster has a
reduced expression representative that is itself a braid cluster.  Thus, no
heap-instance of $h$ uses the internal columns of a braid cluster.  Since we
are assuming that each $c^{(i)}$ heap-avoids the patterns from $H$, we have
that $w$ heap-avoids the patterns from $H$.

Finally, note that if we compose the constructions we have given above in either
order, then we obtain the identity.  Hence, we have shown a bijection and the
enumerative formulas follow.  $F(x)$ contributes terms of the form where $k =
0$.  Using Lemma~\ref{l:pieces}, we obtain the generating function that counts
sequences of the form given in (\ref{e:sequences}) as the product
\[ L(x) \cdot {1 \over {1-x}} \cdot {1 \over {1 - M(x) \cdot {1 \over {1-x}}}}
\cdot L(x) \]
because each term of the sequence in (\ref{e:sequences}) is independent.
Multiplying the numerator and denominator of the third factor by $(1-x)$, we
obtain the second result.  In the case where $w$ is freely braided, each
$b^{(i)}$ term is $\emptyset$, so the corresponding generating function is
simply
\[ L(x) \cdot {1 \over {1 - M(x)}} \cdot L(x) \]
which gives the first result.
\end{proof}

\begin{corollary}\label{c:enumerate}
Let $H$ be a finite set of connected fully commutative permutations each of
whose heaps contains at least two entries in each internal column.
If $F(x) = \sum_{n \geq 0} |S_{n+1}^{[321]}(H)| \ x^n$
is a rational (respectively, algebraic) generating function, then
\[ \sum_{n \geq 0} |S_{n+1}^{\{[3421], [4231], [4312], [4321]\}}(H)| \
x^n \text{\ \ and \ \ } \sum_{n \geq 0} |S_{n+1}^{\{[3421], [4312], [4321]\}}(H)| \ x^n. \]
are also rational (respectively, algebraic).
\end{corollary}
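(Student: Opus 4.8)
The plan is to deduce Corollary~\ref{c:enumerate} directly from the formulas in Theorem~\ref{t:enumerate} by observing that rationality and algebraicity of power series are each preserved under the field operations and the specific rational transformation exhibited there. The corollary makes no new claim about bijections or heaps; it is entirely a closure statement about the class of rational (respectively algebraic) generating functions, so the work is to verify that the transformation $F(x) \mapsto F(x) + L(x)^2/(1 - M(x))$ (and its second-form analogue with $1 - x - M(x)$ in the denominator) stays within the relevant class whenever $F(x)$ does.

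First I would record that $L(x) = F(x) - xF(x) - 1 = (1-x)F(x) - 1$ and $M(x) = (1-x)^2 F(x) - 1$ are each obtained from $F(x)$ by multiplication by the polynomial $(1-x)$ or $(1-x)^2$ and subtraction of the constant $1$; since polynomials are rational and the rational functions over $\R$ (or over the relevant coefficient field) form a field containing the polynomials, $L(x)$ and $M(x)$ lie in the same class as $F(x)$. For the algebraic case the same reasoning applies because the algebraic power series form a field that contains all rational functions, so any polynomial combination of $F(x)$ with rational-function coefficients is again algebraic. This handles the construction of $L$ and $M$ uniformly in both cases.

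Next I would treat the two target generating functions. Each has the shape $F(x) + L(x)^2 / D(x)$ where $D(x)$ is either $1 - M(x)$ or $1 - x - M(x)$. The numerator $L(x)^2$ is a product of elements of the class, hence in the class; the denominator $D(x)$ is a polynomial combination of $M(x)$ with the class, hence in the class and not identically zero (its constant term is $1 - M(0)$ or $1 - 0 - M(0)$, and $M(0) = |M_0| = 0$, so $D(0) = 1 \neq 0$, confirming the quotient is a well-defined power series). Because the class is closed under quotient by a nonzero element and under addition, the quotient $L(x)^2/D(x)$ and then the sum with $F(x)$ remain in the class. The only point needing a word of care is that in the algebraic case one must invoke the closure of algebraic power series under reciprocation of a series with nonzero constant term, which is standard.

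I do not expect a serious obstacle here, since the statement is essentially a formal corollary of Theorem~\ref{t:enumerate}: the entire combinatorial and bijective content lives in the theorem, and the corollary merely packages the algebraic closure properties. The closest thing to a subtlety is ensuring the denominators do not vanish as formal power series, which I would dispatch by the constant-term computation $M(0) = 0$ noted above; apart from that, the proof is a one-paragraph appeal to the field (respectively, the algebraic-closure-under-field-operations) structure of the coefficient ring of power series.
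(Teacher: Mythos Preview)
Your argument is correct and matches the paper's approach: the corollary is stated immediately after Theorem~\ref{t:enumerate} without proof, as it follows at once from the explicit rational expressions there together with the closure of rational (respectively algebraic) power series under field operations. Your care in checking $D(0)=1$ via $M(0)=0$ is more than the paper bothers to record, but it is exactly the right verification.
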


Using Theorems~\ref{t:translate_olh} and \ref{t:enumerate}, we can enumerate
several interesting classes characterized by permutation pattern avoidance.
Note that we index the coefficients in the generating functions by the rank of
the Coxeter group rather than by the number of entries appearing in the 1-line
notation.

\bigskip
\noindent
\begin{tabular}{|p{0.95in}|l|l|}
\hline
& Generating function & Initial sequence \\
\hline
$[321]$-avoiding & ${{1 - 2x - \sqrt{1-4x}} \over 2x^2}$ &
{\tiny $1 + 2
x + 5 x^2 + 14 x^3 + 42 x^4 + 132 x^5 + 429 x^6 + 1430 x^7 + \dots$ }  \\
\hline
$L(x)$ \ \ \ \ \ \ for $H = \emptyset$ & ${{1 - 3x - (x-1) \sqrt{1-4x}} \over 2x^2}$ & {\tiny
$x + 3 x^2 + 9 x^3 + 28 x^4 + 90 x^5 + 297 x^6 + 1001 x^7 + \dots$ } \\
\hline
$M(x)$ \ \ \ for $H = \emptyset$ & ${{1 - 4x + 3x^2 -2x^3 - (x-1)^2 \sqrt{1-4x}} \over 2x^2}$
& {\tiny $2 x^2 + 6 x^3 + 19 x^4 + 62 x^5 + 207 x^6 + 704 x^7 + \dots$ } \\
\hline
Freely-braided & ${{2x-2x^2-2x \sqrt{1-4x}} \over {-1+4x-x^2+2x^3+(x-1)^2
\sqrt{1-4x}}} $ & {\tiny $1 + 2x + 6x^2 + 20x^3 + 71 x^4 + 260 x^5 +
971x^6 + 3674x^7 + \dots$ } \\
\hline
Maximally-clustered & ${{2x} \over {-1 + 4x - 2x^2 + \sqrt{1-4x}}}$ &
{\tiny $1+2x+6x^2+21x^3+78x^4+298x^5+1157x^6+4539x^7+\dots$ } \\
\hline
$[321]$-hexagon avoiding & ${{-x^5+x^4+3x^3-4x^2+4x-1} \over
{x^6-4x^5-4x^4+9x^3-11x^2+6x-1}}$ & {\tiny $1 + 2 x + 5 x^2 + 14 x^3 + 42
x^4 + 132 x^5 + 429 x^6 + 1426 x^7 + \dots$ } \\
\hline
$L(x)$ for $H = \{[46718235]\}$ & ${{2x^5+2x^4-2x^3+3x^2-x} \over
{x^6-4x^5-4x^4+9x^3-11x^2+6x-1}}$ & \tiny { $x + 3 x^2 + 9 x^3 + 28 x^4 +
90 x^5 + 297 x^6 + 997 x^7 + \dots$ } \\
\hline
$M(x)$ for $H = \{[46718235]\}$ & ${{-x^7+2x^6+4x^5-5x^4+6x^3-2x^2} \over
{x^6-4x^5-4x^4+9x^3-11x^2+6x-1}}$ & {\tiny $2 x^2 + 6 x^3 + 19 x^4 + 62
x^5 + 207 x^6 + 700x^7 + \dots$ } \\
\hline
Freely-braided hexagon-avoiding & ${{-x^6-2x^5+2x^4+x^3-3x^2+4x-1} \over
{x^7-x^6-8x^5+x^4+3x^3-9x^2+6x-1}}$ & {\tiny $1 + 2x + 6x^2 + 20x^3 + 71
x^4 + 260 x^5 + 971x^6 + 3670x^7 + \dots$ } \\
\hline
Maximally-clustered hexagon-avoiding & ${{3x^5+x^4-5x^3+7x^2-5x+1} \over
{-3x^6+4x^5+8x^4-14x^3+15x^2-7x+1}}$ & {\tiny
$1+2x+6x^2+21x^3+78x^4+298x^5+1157x^6+4535x^7+\dots$ } \\
\hline
\end{tabular}
\bigskip

\begin{remark}
The first line is the Catalan generating function which appears in this context
by \cite{simion-schmidt}, while the sixth line is Theorem~\ref{t:s-w} due to
\cite{s-w}.  The freely braided permutations have previously been enumerated in
\cite{mansour}, while the other formulas seem to be new.  They follow from
Theorem~\ref{t:translate_olh} together with Lemma~\ref{l:pieces} and
Theorem~\ref{t:enumerate} by taking $H = \emptyset$ and $H = \{ [46718235] \}$,
respectively.
\end{remark}

\begin{corollary}
The number $b_n$ of freely-braided hexagon-avoiding permutations in $S_n$ satisfies the
recurrence
\[ b_{n+1} = 6 b_{n} - 9b_{n-1} + 3b_{n-2} + b_{n-3} - 8b_{n-4} - b_{n-5} +
b_{n-6} \]
and the number $m_n$ of maximally-clustered hexagon-avoiding permutations in $S_n$ satisfies the
recurrence
\[ m_{n+1} = 7 m_{n} - 15m_{n-1} + 14m_{n-2} - 8m_{n-3} - 4m_{n-4} + 3m_{n-5} \]
for all $n \geq 9$, with initial conditions given by Figure~\ref{f:mc.enum}.
\end{corollary}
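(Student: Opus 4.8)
The plan is to read the two recurrences directly off the denominators of the rational generating functions recorded in the table above, using the standard correspondence between rational generating functions and linear constant-coefficient recurrences. No new combinatorics is needed; the content of the corollary is a coefficient-extraction statement about generating functions that have already been computed.

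First I would confirm that the two generating functions are applicable. The pattern $[46718235]$ is a connected fully commutative permutation whose heap (the hexagon) has at least two entries in each of its internal columns $s_2, \dots, s_6$, so $H = \{[46718235]\}$ satisfies the hypotheses of Theorem~\ref{t:enumerate}. Taking $F(x) = \sum_{n \geq 0} |S_{n+1}^{[321]}(H)| \, x^n$ to be the $[321]$-hexagon-avoiding generating function, which is rational by Theorem~\ref{t:s-w}, Theorem~\ref{t:enumerate} together with Corollary~\ref{c:enumerate} produces the rational generating functions
\[ B(x) = \sum_{n \geq 0} b_{n+1} \, x^n \quad \text{and} \quad \mathcal{M}(x) = \sum_{n \geq 0} m_{n+1} \, x^n \]
recorded in the freely-braided-hexagon-avoiding and maximally-clustered-hexagon-avoiding rows of the table. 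Here I would invoke Theorem~\ref{t:translate_olh} to identify the heap-avoidance classes with the classical hexagon-avoiding pattern classes, so that the coefficient of $x^n$ is exactly the number of the relevant permutations in $S_{n+1}$.

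Next I would apply the standard fact: if $\sum_{k \geq 0} a_k x^k = P(x)/Q(x)$ with $Q(x) = \sum_{j=0}^{d} q_j x^j$ and $q_0 \neq 0$, then equating coefficients of $x^k$ in the identity $\left(\sum_k a_k x^k\right) Q(x) = P(x)$ gives $\sum_{j=0}^{d} q_j a_{k-j} = 0$ for every $k > \deg P$, which rearranges into a linear recurrence expressing $a_k$ through $a_{k-1}, \dots, a_{k-d}$. Applying this with $Q(x) = x^7 - x^6 - 8x^5 + x^4 + 3x^3 - 9x^2 + 6x - 1$ for $B(x)$, and with $Q(x) = -3x^6 + 4x^5 + 8x^4 - 14x^3 + 15x^2 - 7x + 1$ for $\mathcal{M}(x)$, and normalizing by the nonzero constant terms, yields exactly the two stated recurrences after the index shift $a_k = b_{k+1}$ (respectively $a_k = m_{k+1}$) forced by the fact that the coefficient of $x^n$ counts permutations in $S_{n+1}$.

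The only genuinely delicate point — and the thing I would check most carefully — is the range of validity. The recurrence supplied by the standard fact holds for $k > \deg P$, and since the numerators have degrees $6$ and $5$ respectively, each identity holds for all sufficiently large $n$. I would then justify the stated uniform bound $n \geq 9$ by observing that it is the (mildly conservative) threshold at which, for both recurrences, every term on the right-hand side — reaching back to $b_{n-6}$ and to $m_{n-5}$ respectively — lies within the range of initial values tabulated in Figure~\ref{f:mc.enum}, which begins at $n = 3$. Finally I would verify that the low-order coefficients of $B(x)$ and $\mathcal{M}(x)$ agree with the initial data in Figure~\ref{f:mc.enum}, which closes the argument.
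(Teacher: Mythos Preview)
Your proposal is correct and matches the paper's approach exactly: the corollary is stated without proof in the paper, as it follows immediately by reading the recurrences off the denominators of the rational generating functions already recorded in the table (which in turn come from Theorem~\ref{t:enumerate} with $H = \{[46718235]\}$), together with the standard correspondence between rational generating functions and linear constant-coefficient recurrences. Your handling of the index shift $a_k = b_{k+1}$ and the explanation of the threshold $n \geq 9$ in terms of the tabulated initial conditions are both accurate.
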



\bigskip
\section{Diamond reductions}\label{s:diamond}

In this section, we restrict to considering fully commutative permutations.

\begin{definition}
Suppose that $h$ is a connected, fully commutative permutation whose heap
contains at least two entries in each internal column.  We say that every
minimal pair of entries together with the entries of their distinct resolution
forms a \em minimal diamond \em inside the heap of $h$.  Form a new heap whose
entries correspond to minimal diamonds.  The poset structure of the new heap is
inherited from the poset structure on the minimal diamonds in the heap of $h$ by
taking the transitive closure of the relation that two minimal diamonds are
related if they share an edge.  The heap obtained in this way from $h$ is called
the \em diamond reduction \em of $h$.
\end{definition}

Pictorially, the heap of the diamond reduction of $h$ is obtained from the heap
of $h$ by adding entries at the centers of all minimal diamonds and then erasing
the heap of $h$.

\begin{example}\label{ex:3hex}
The diamond reduction of the hexagon $s_5 s_6 s_7 s_3 s_4 s_5 s_6 s_2 s_3 s_4
s_5 s_1 s_2 s_3$ is the \em 3-hexagon \em $s_4 s_5 s_2 s_3 s_4 s_1 s_2$.
\[
\xymatrix @=3pt @! {
 & & \hf \ar@{-}[dl] \ar@{-}[dr] & & \hf \ar@{-}[dl] \ar@{-}[dr] \\
 & \hf \ar@{-}[dr] \ar@{-}[dl] & \hb & \hf \ar@{-}[dl] \ar@{-}[dr] & \hb & \hf \ar@{-}[dl] \ar@{-}[dr]\\
 \hf \ar@{-}[dr] & \hb & \hf \ar@{-}[dr] \ar@{-}[dl] & \hb & \hf \ar@{-}[dl] \ar@{-}[dr] & \hb & \hf \ar@{-}[dl] \\
 & \hf \ar@{-}[dr] & \hb & \hf \ar@{-}[dl] \ar@{-}[dr] & \hb & \hf \ar@{-}[dl] \\
 & & \hf & & \hf \\
}
\parbox[t]{1in}{ \vspace{.3in}
\hspace{.3in} $\longrightarrow$ }
\xymatrix @=3pt @! {
 & & & \\
 & \hb & & \hb \\
 \hb & & \hb & & \hb \\
 & \hb & & \hb \\
}
\]
\end{example}

\begin{proposition}\label{p:dr_bij}
The diamond reduction is a bijection from the set of connected fully commutative
heaps on $n$ columns with at least two entries in each internal column to the
set of connected fully commutative heaps on $n-2$ columns.
\end{proposition}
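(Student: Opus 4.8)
The plan is to realize the diamond reduction as a concrete operation on lattice point sets, and then to build an explicit inverse ``diamond expansion.'' Throughout I assume $n \geq 3$, so that internal columns exist. The foundational step is a normal form for the coordinates: I claim that a connected fully commutative heap can be embedded so that every entry of column $i$ lies at a height congruent to $i$ modulo $2$ (hence all entries lie on one color class of the checkerboard $\{(x,y) : x+y \text{ even}\}$), with every cover between adjacent columns diagonal and every minimal diamond \emph{symmetric}, meaning its two resolving entries sit at the common midpoint height. The key point is that consecutive entries of a column then differ in height by exactly $2$: between them lies a unique entry of each neighboring column by the distinct-resolution property, and a \emph{non}-symmetric diamond would, after overlaying strings, produce one of the decreasing subsequences forbidden by the $[321]$-avoidance characterizing fully commutative elements. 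This gradedness is the real content behind the picture, and I would establish it first; given it, the four entries of a minimal diamond surround a single odd lattice point, its \emph{center}, and the diamond reduction $D(h)$ is exactly the set of centers, reindexed from columns $2,\dots,n-1$ to $1,\dots,n-2$.

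Next I would check that $D$ maps into the target class. The extremal columns $1$ and $n$ carry one entry each by Definition~\ref{d:supports}. A counting argument pins down the neighboring columns: every minimal pair in column $2$ requires its own entry of column $1$ strictly between its two members, and column $1$ has only one entry, so column $2$, which has at least two entries by hypothesis, has exactly two; symmetrically for column $n-1$. Consequently $D(h)$ has a single entry in each extremal column and is a connected heap on $n-2$ columns. The substantive claim is that $D(h)$ is fully commutative, which I would prove by the distinct-resolution criterion of the Lateral Convexity Lemma of \cite{b-w}: given two consecutive centers in a column, sharing their middle entry, the two flanking entries of the column immediately to the left form a minimal pair of $h$ (using that each minimal pair has a \emph{unique} intervening entry in an adjacent column), and the diamond on that pair shares an edge with each of the two given centers; the mirror statement holds on the right, producing resolving centers in two distinct columns. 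This is precisely the kind of argument the lattice model makes rigorous.

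For the inverse I would define the diamond expansion $E$ on a connected fully commutative heap $H'$ on $m = n-2$ columns: embed $H'$ in its symmetric normal form on the odd color class, and let $E(H')$ be the union, over all entries of $H'$, of the four neighboring even lattice points, reindexed back to columns $1,\dots,n$. I would first prove $E \circ D = \mathrm{id}$, which yields injectivity of $D$: the content is that every entry of $h$ is a vertex of at least one minimal diamond. Indeed an internal-column entry belongs to some minimal pair and hence to a diamond, while an extremal-column entry is the unique resolving entry of the unique diamond centered in the adjacent column; since all four vertices of every diamond lie in $h$, taking neighbors of the centers recovers $h$ exactly. That $E$ lands back in the source class, i.e.\ that $E(H')$ is a reduced, connected, fully commutative heap with at least two entries in every internal column, uses the hypothesis on $H'$: a minimal pair of $E(H')$ forces, through the (symmetric, hence midpoint) distinct resolution of the corresponding minimal pair of $H'$, an entry of $H'$ at the center, whose four neighbors then supply the required distinct resolution in $E(H')$.

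The hard part, and the step I expect to be the main obstacle, is the remaining inclusion $D(E(H')) \subseteq H'$ needed for surjectivity: I must rule out a \emph{spurious} center of $E(H')$, an odd point $w \notin H'$ all four of whose even neighbors happen to lie in $E(H')$. This cannot be settled on the level of arbitrary point sets; if one drops the fully commutative hypothesis one can manufacture such a hole from a non-symmetric diamond. It is exactly the symmetric normal form that excludes it: were all four even neighbors of $w$ present, the symmetric structure around $w$ would force entries of $H'$ into positions whose own distinct resolutions (again forced to midpoints) place an entry of $H'$ at $w$ itself, a contradiction. I would carry this out as a short finite case analysis on which entries of $H'$ account for the four neighbors of $w$, invoking the Lateral Convexity Lemma for $H'$ at each step. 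Once this no-hole statement is in place, $D$ and $E$ are mutually inverse and the proposition follows.
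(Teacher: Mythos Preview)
Your plan is sound but takes a quite different route from the paper. The paper's proof is short: it observes that one can represent each minimal diamond by its \emph{maximal} entry instead of its center, and the set of such maximal entries forms a convex subposet of the heap of $h$, hence is automatically the heap of a reduced expression. Full commutativity of $g$ is then argued by contrapositive: a short-braid in $g$ would, upon replacing each entry by the diamond it represents, produce a short-braid in $h$. The inverse is simply asserted (``form a heap with minimal diamonds centered at the heap entries of $g$'') without verifying the two compositions. Your approach trades that slickness for explicitness: you fix lattice coordinates, build the inverse $E$ concretely, and check $E\circ D$ and $D\circ E$ separately. In doing so you isolate the ``no spurious centers'' question, which the paper's one-line treatment of the inverse does not address; that is genuine added value, and your outline of the case analysis for it is reasonable.

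There is, however, a gap in your justification of the symmetric normal form. You argue that consecutive entries of a column must differ in height by exactly $2$ because ``a non-symmetric diamond would, after overlaying strings, produce one of the decreasing subsequences forbidden by the $[321]$-avoidance.'' This does not work as stated: the lattice embedding is not unique, and drawing a diamond asymmetrically in some coordinate realization has no effect on the underlying permutation or its pattern content, so cannot produce a $[321]$. What you actually need is a poset statement: the two resolving entries of any minimal diamond are \emph{incomparable} (any chain between columns $i-1$ and $i+1$ must pass through column $i$, and there is no column-$i$ entry strictly between the pair), and more globally the heap poset is \emph{ranked}, so that the rank function furnishes the symmetric embedding. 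Rankedness follows either by inducting along covers using the uniqueness of resolutions you already proved, or by invoking the correspondence between fully commutative type $A$ elements and skew shapes. Once you replace the string-diagram sentence with that argument, the rest of your plan goes through.
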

\begin{proof}
Let $h$ be a connected fully commutative heap on $n$ columns with at least two
entries in each internal column.  The diamond reduction $g$ of $h$ gives a
reduced heap because we can represent the minimal diamonds by their maximal
entries (rather than the centers) and the result is a convex subposet of the
heap of $h$.  The heap of $g$ is also connected since every internal column of
the heap of $h$ has a minimal diamond.  The diamond reduction can be reversed by
forming a heap with minimal diamonds centered at the heap entries of $g$.
Hence, the diamond reduction of $h$ is fully commutative because any short-braid
instance in the heap of $g$ would imply the existence of a short-braid in $h$
when we consider the minimal diamonds centered at the entries of the heap of
$g$.  We also reduce the number of columns by 2 since only the internal columns
of the heap of $h$ support minimal diamonds that become entries in the heap of
$g$.
\end{proof}

Our main goal in this section is to describe how the generating functions for
$\{|S_{n+1}^{[321]}(h)|\}$ and $\{|S_{n+1}^{[321]}(g)|\}$ are related when $g$
is the diamond reduction of $h$.

\begin{lemma}\label{l:dr_connected}
Let $H$ be a set of connected, fully commutative permutations.
Suppose
\[ F(x) = \sum_{n \geq 0} |S_{n+1}^{[321]}(H)| \ x^n \text{\ \  and \ \ }
F_{c}(x) = \sum_{n \geq 0} |\{ p \in S_{n+1}^{[321]}(H) : \text{ $p$ is connected } \}| \ x^n, \]
with $F(0) = 1$ and $F_{c}(0) = 0$.
Then the generating functions are rationally related according to
\[ F(x) = {{1+F_{c}(x)} \over {1-x-x F_{c}(x)}} \text{\ \  and \ \ }
F_{c}(x) = { {F(x)-x F(x)-1} \over {1+x F(x)} }. \]
\end{lemma}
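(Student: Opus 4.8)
The plan is to prove the first identity by a recursive decomposition of fully commutative heaps that conditions on the leftmost column, and then to obtain the second identity by solving the first for $F_c(x)$, which is routine algebra.

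I would begin with the structural picture underlying the decomposition. Given $w \in S_{n+1}^{[321]}(H)$, its heap sits on columns $1, \ldots, n$, and since $w$ is fully commutative its support is a disjoint union of maximal intervals of occupied columns, namely the connected components of $w$. By maximality, consecutive components are separated by at least one empty column, and because a connected permutation occupies every column of its support, a component has no interior empty columns. Thus the components are genuinely independent: two of them sit on column-intervals at distance $\geq 2$, so their generators commute and the heap is the disjoint sum of the component heaps. I would first record that each such component, read on its own block of columns, is again a connected fully commutative heap, and conversely that gluing connected fully commutative heaps across single empty columns never produces a short braid, since any short braid $s_i s_{i\pm1} s_i$ uses two adjacent generators that must lie in a common component.

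The crux is to check that this decomposition is compatible with $H$-avoidance; this is where I expect the main difficulty. The key point, which uses the hypothesis that every $h \in H$ is connected, is that no heap-instance of $h$ can straddle the empty-column boundary between two components. Indeed, an orientation preserving Coxeter embedding realizing such an instance preserves (non-)commutation, so it must send adjacent generators of $h$ to adjacent columns; since the support of a connected $h$ is an interval, its image is a block of consecutive occupied columns, which therefore lies inside a single connected component. Consequently $w$ heap-avoids $H$ if and only if each of its components does, so the connected building blocks of $w$ are exactly the connected elements enumerated by $F_c(x)$, a component of support size $k$ carrying weight $x^k$.

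With this in hand I would read off the generating function by conditioning on column $1$. Either the heap is empty (weight $1$); or column $1$ is empty, and deleting it leaves an arbitrary element of the class on the remaining columns (weight $x F(x)$); or column $1$ begins a connected component, which is followed either by nothing or by one empty separator column and then an arbitrary element of the class (weight $F_c(x)\,(1 + x F(x))$). These cases are disjoint and exhaustive, giving
\[ F(x) = 1 + x F(x) + F_c(x)\,(1 + x F(x)), \]
with the boundary values $F(0) = 1$ and $F_c(0) = 0$ automatic. Collecting the terms containing $F(x)$ yields $F(x)\,(1 - x - x F_c(x)) = 1 + F_c(x)$, which is the first identity; solving instead for $F_c(x)$ gives $F_c(x) = (F(x) - x F(x) - 1)/(1 + x F(x))$, completing the proof.
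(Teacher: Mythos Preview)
Your proof is correct and rests on the same combinatorial decomposition as the paper's: a fully commutative $H$-avoiding heap is a concatenation of connected $H$-avoiding pieces separated by empty columns, with connectedness of each $h\in H$ guaranteeing that avoidance is inherited by (and from) the pieces. The only difference is bookkeeping: the paper reads off $F(x)$ directly as a sequence product
\[
F(x)=\frac{1}{1-x}+\frac{1}{1-x}\cdot F_c(x)\cdot\frac{1}{1-\frac{x}{1-x}F_c(x)}\cdot\frac{1}{1-x},
\]
whereas you condition on column $1$ to obtain the recursion $F(x)=1+xF(x)+F_c(x)(1+xF(x))$; both simplify to the stated identity, and your treatment of the $H$-avoidance compatibility is in fact more explicit than the paper's.
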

\begin{proof}
To see this, observe that every not-necessarily-connected $H$-avoiding heap on
$n$ columns decomposes uniquely into connected components, each of which is
counted by $F_{c}(x)$.  Conversely, if we have an ordered collection
$\mathcal{C}$ of connected permutations that all heap-avoid the patterns from
$H$, then we can place them together from left to right, to obtain a heap which
has the heaps from $\mathcal{C}$ as its connected components.

The generating function identity that we obtain from this observation is
\[ F(x) = {1 \over {1-x}} + {1 \over {1-x}} \cdot F_{c}(x) \cdot {1 \over {1 - {
{x \over {1-x}} F_{c}(x) }}} {1 \over {1-x}}. \]
Specifically, we either have some connected components, or none.  The initial
${1 \over {1-x}}$ term corresponds to the case where we have none.  Otherwise,
we have zero or more initial empty columns, followed by at least one connected
component, followed by zero or more copies of empty columns together with other
connected components, finally followed by zero or more empty columns on the
right side of the heap.  This is the second term.  This formula simplifies to
the one given in the statement and can be inverted.
\end{proof}

One fundamental subclass of the fully commutative permutations are those with no
minimal diamonds at all.  It is straightforward to see that this corresponds to
having at most one entry in each column, or equivalently to heap-avoiding $s_2
s_1 s_3 s_2$.  This pattern class was previously enumerated in \cite{fan1} and
\cite{west2}.  Also, \cite{t3} enumerates these permutations using the statistic of
Coxeter length.

\begin{lemma}\label{l:diamond_avoiding}
We have
\[ \sum_{n \geq 0} |S_{n+1}^{[321], [3412]}| \ x^n = \sum_{n \geq 0} |S_{n+1}^{[321]}(s_2 s_1 s_3 s_2)| \ x^n = { {1-x} \over {1 - 3x + x^2 }}. \]
\end{lemma}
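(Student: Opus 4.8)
The plan is to prove the two claimed equalities by establishing a bijection between the relevant pattern class and a class of fully commutative heaps with at most one entry per column, and then enumerating the latter directly via a transfer-matrix or state-based decomposition of the heap column by column. First I would justify the first equality $|S_{n+1}^{[321],[3412]}| = |S_{n+1}^{[321]}(s_2 s_1 s_3 s_2)|$ by invoking Theorem~\ref{t:translate_olh}: I would check that the single heap pattern $s_2 s_1 s_3 s_2$ translates, under the $U^{[321]}$ construction, to the classical $1$-line pattern $[3412]$, verifying that the relevant element of $U^{[321]}(s_2 s_1 s_3 s_2)$ is an ideal pattern in $S^{[321]}$ so that heap-avoidance of $s_2 s_1 s_3 s_2$ coincides with classical avoidance of $[3412]$ on top of $[321]$-avoidance. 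The key structural fact to leverage is the one stated just before the lemma: heap-avoiding $s_2 s_1 s_3 s_2$ is equivalent to having at most one entry in every column of the heap.

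Next I would count the connected such heaps, since Lemma~\ref{l:dr_connected} lets me pass freely between the connected generating function $F_c(x)$ and the full generating function $F(x)$. A connected fully commutative heap on $n+1$ columns with at most one entry per column is, by the covering rule for heaps in type $A$ (the lattice-point description given in the background), simply a connected ``staircase'' of single entries: reading columns left to right, each column carries exactly one entry, and consecutive entries are poset-related because adjacent columns cannot commute. The combinatorial content is therefore a sequence of up/down choices for the relative vertical positions of successive single entries, subject to the constraint that the configuration genuinely forms a connected heap with no repeated column. I expect this to reduce to a simple linear recurrence; the natural approach is a two-state transfer: track, as I add each new column to the right, whether the previous entry sits above or below, giving a rational generating function with denominator $1 - 3x + x^2$ after accounting for the empty-column gluing. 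I would then feed this $F_c(x)$ through the inversion formula of Lemma~\ref{l:dr_connected} to recover $F(x) = (1-x)/(1-3x+x^2)$, matching the stated answer.

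The main obstacle I anticipate is the bijective/translation step rather than the enumeration: I must be careful that classical $[3412]$-avoidance (on top of $[321]$) really does correspond exactly to the ``at most one entry per column'' heap condition, and not merely inject one way. Concretely, a $[321]$-avoiding permutation is fully commutative, so it has a unique heap, and I need the equivalence $[3412]\text{-avoiding} \iff$ no column has two entries $\iff$ heap-avoiding $s_2 s_1 s_3 s_2$. The forward direction is the delicate one: two entries in a single column $i$ of a fully commutative heap force, by the distinct-resolution requirement, entries in columns $i-1$ and $i+1$ between them, and I would argue that overlaying strings on this configuration produces exactly a $[3412]$ pattern instance in the $1$-line notation. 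Once that correspondence is pinned down, the enumeration is routine and the two displayed rational functions coincide, since the middle and right expressions are manifestly equal and the left equals the middle by the pattern translation.
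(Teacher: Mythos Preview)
Your proposal is correct and follows essentially the same route as the paper: invoke Theorem~\ref{t:translate_olh} for the translation between $[3412]$-avoidance and heap-avoidance of $s_2 s_1 s_3 s_2$, observe that the connected heaps in this class are exactly lattice paths of up/down steps (one entry per column), giving $F_c(x) = x/(1-2x)$, and then apply Lemma~\ref{l:dr_connected} to obtain $(1-x)/(1-3x+x^2)$. Your extra care about the bijective translation step is more explicit than the paper's one-line citation, but the architecture is identical.
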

\begin{proof}
The second equality follows from Theorem~\ref{t:translate_olh}.  Let $w$ be a
connected element of $S_{n+1}^{[321]}(s_2 s_1 s_3 s_2)$.  Then the heap of $w$
is a lattice path consisting of $n-1$ steps that are either ``up'' or ``down.''
As each step is independent, the generating function for these is ${{x} \over
{1-2x}}$.  Applying Lemma~\ref{l:dr_connected} gives the result.
\end{proof}

We are now in a position to state and prove our main result in this section.

\begin{theorem}\label{t:main_diamond}
Suppose that $h$ is a connected, fully commutative permutation whose heap
contains at least two entries in each internal column and let $g$ be the diamond
reduction of $h$.  If
\[ G(x) = \sum_{n \geq 0} |S_{n+1}^{[321]}(g)| \ x^n \text{, \ \  \ \ }
G_{c}(x) = \sum_{n \geq 0} |\{ p \in S_{n+1}^{[321]}(g) : \text{ $p$ is
connected } \}| \ x^n, \]
and $F(x) = \sum_{n \geq 0} |S_{n+1}^{[321]}(h)| \ x^n$,
with $G(0) = 1 = F(0)$ and $G_{c}(0) = 0$,
then the generating functions are rationally related according to
\begin{equation}\label{e:dr_main}
F(x) = { {1-x-x G_{c}(x)} \over {1-3x+x^2+(x^2-x)G_{c}(x)}}
= { {1} \over {1-2x-x^2 G(x)}}.
\end{equation}
\end{theorem}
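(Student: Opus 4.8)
The plan is to establish a bijective/decomposition correspondence between $[321]$-avoiding heaps that heap-avoid $h$ and certain structured objects built from heaps that heap-avoid the diamond reduction $g$. The key insight, supplied by Proposition~\ref{p:dr_bij}, is that the diamond reduction is a bijection between connected fully commutative heaps with at least two entries per internal column and connected fully commutative heaps on two fewer columns. So the strategy is: take a connected $h$-heap-avoiding permutation $w$, identify its maximal ``core'' of overlapping minimal diamonds (the part of the heap that is dense enough to support diamonds), and apply the inverse diamond reduction to relate that core to a $g$-heap-avoiding object. First I would set up the connected generating functions and reduce to proving the relationship between $F_c(x)$ and $G_c(x)$, since Lemma~\ref{l:dr_connected} already converts freely between the connected and not-necessarily-connected generating functions for both $h$ and $g$.

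The heart of the argument is a structural decomposition of a connected $[321]$-avoiding heap that heap-avoids $h$. I would decompose the heap of such a $w$ by isolating its maximal diamond-dense region. Concretely, a connected fully commutative heap either has some internal column with only one entry (so it ``pinches'' and decomposes at that column), or it is diamond-dense with at least two entries in every internal column, in which case it is the inverse diamond reduction of a connected heap on $n-2$ columns. The condition that $w$ heap-avoids $h$ translates, under the diamond reduction, precisely to the condition that the reduced heap heap-avoids $g$: a heap-instance of $h$ inside $w$ corresponds bijectively to a heap-instance of $g$ inside the diamond reduction, because $h$ itself is diamond-dense and $g$ is by definition its reduction. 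This is where I would invoke the bijection of Proposition~\ref{p:dr_bij} applied locally to the diamond-dense core.

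Assembling these pieces, a connected $h$-avoiding heap is built from single-column ``pinch points'' (each contributing a factor of $x$) glued to diamond-dense cores, where each core of width $m$ is the inverse diamond reduction of a connected $g$-avoiding heap of width $m-2$ (contributing a factor of $x^2 G_c(x)$-type, since the inverse reduction widens by two columns). The recursive grammar for a connected component then reads roughly as: a connected $h$-avoiding heap is either a single column, or a column followed by more structure, or a diamond-dense core grown from a $g$-avoiding connected heap, glued recursively. Translating this grammar into a generating-function equation and solving for $F_c(x)$ in terms of $G_c(x)$ should yield, after substitution into the identities of Lemma~\ref{l:dr_connected} for both the $h$-side and the $g$-side, the first equality in \eqref{e:dr_main}. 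The second equality, $F(x) = 1/(1-2x-x^2 G(x))$, I would obtain by using Lemma~\ref{l:dr_connected} to eliminate $G_c(x)$ in favor of $G(x)$ and simplifying; this is a routine rational manipulation once the first equality is in hand.

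The main obstacle I anticipate is making the decomposition at the ``pinch points'' both exhaustive and unambiguous — that is, pinning down exactly how a connected $h$-avoiding heap breaks into alternating single-entry columns and diamond-dense cores so that the decomposition is a genuine bijection rather than an over- or under-count. In particular, I expect the delicate point to be verifying that the inverse diamond reduction of a $g$-avoiding heap, when glued to neighboring columns, produces exactly the $h$-avoiding heaps with no spurious diamonds created at the seams and no $h$-instances straddling a seam. Controlling these boundary interactions (the interface between a core and an adjacent single column) carefully enough to guarantee the generating-function bookkeeping is exact will be the crux; once the grammar is nailed down, extracting \eqref{e:dr_main} is straightforward algebra.
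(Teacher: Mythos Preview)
Your proposal is correct and follows essentially the same strategy as the paper's proof: decompose an $h$-heap-avoiding fully commutative heap into diamond-dense regions (which, via Proposition~\ref{p:dr_bij}, correspond to connected $g$-heap-avoiding heaps on two fewer columns) and diamond-sparse regions (columns supporting at most one entry), then assemble the generating function from these pieces and simplify using Lemma~\ref{l:dr_connected}.

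The one place where the paper is more explicit than your sketch is in handling the sparse regions. You describe these as single-column ``pinch points'' contributing a factor of~$x$, but in fact several consecutive single-entry columns can occur, forming a lattice-path segment. The paper packages these as the intervals $E_i$ whose restrictions heap-avoid $s_2 s_1 s_3 s_2$, and invokes Lemma~\ref{l:diamond_avoiding} to supply the exact generating function $E(x) = (1-x)/(1-3x+x^2)$ for them (with the appropriate boundary variants $E_{LR}(x)$ and $E_M(x)$). The resulting identity is
\[
F(x) = E(x) + E_{LR}(x)\,\frac{1}{1 - G_c(x)\,E_M(x)}\,G_c(x)\,E_{LR}(x),
\]
working directly with the full $F(x)$ rather than first passing to $F_c(x)$. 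This avoids the recursive grammar you describe and makes the seam bookkeeping (your anticipated obstacle) concrete: the $E_i$ and $D_i$ intervals overlap in exactly one extremal column on each side, and the gluing identifies those unique extremal entries. Once this column decomposition is in hand, the algebra reducing to \eqref{e:dr_main} is routine, and the second equality comes from substituting Lemma~\ref{l:dr_connected} for $G_c$ in terms of $G$, exactly as you say.
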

\begin{proof}
Suppose $w \in S_{n+1}^{[321]}(h)$.  We assign each of the columns $\{1, \ldots n\}$
of the heap of $w$ to intervals $E_0, D_1, E_1, \ldots, D_k, E_k$ according to
whether the column supports more than one entry.  The intervals $D_i$ are
defined to be precisely those that support maximal connected fully commutative
subheaps of the heap of $w$ such that the internal columns of $D_i$ each support
at least two entries.  The remaining intervals $E_i$ consist of columns that
support at most one entry.  In particular, the $E_i$ may include empty columns.

For example, suppose
\[ w = s_1 s_2 s_{12} s_8 s_9 s_{10} s_{11} s_5 s_6 s_7 s_8 s_4 s_5 s_6. \]
Then the heap of $w$ is drawn below, together with its column assignments.
{\scriptsize
\[ \xymatrix @=-4pt @! {
& & \\
& {\hs} & & {\hs} & & {\hs} & & {\hs} & & {\hs} & & {\hf} \\
{\hs} & & {\hs} & & {\hs} & & {\hf} & & {\hf} & & {\hf} & & {\hf} \\
& {\hs} & & {\hs} & & {\hf} & & {\hf} & & {\hf} & \\
{\hs} & & {\hs} & & {\hf} & & {\hf} & & {\hf} & \\
& {\hs} & {\hf} & {\hs} & & {\hf} & & {\hs} & & {\hs} \\
{\hs} & {\hf} & {\hs} & & {\hs} & & {\hs} & & {\hs} & \\
& \text{[} & E_0 & & \text{]} &     & &\text{[$E_1$]}   &     & \text{[} & E_2 & & \text{]} \\
&             &  & & \text{[} & D_1 & &\text{][} & D_2 & \text{]} &  & & \\
} \] }

Let $w|_{D_i}$ denote the restriction of the heap of $w$ to the columns in the
interval $D_i$.  By the hypotheses given, each $w|_{D_i}$ must heap-avoid $h$.
Moreover, we can form the diamond reduction of $w|_{D_i}$ whose heap must be
connected, heap-avoids $g$, and has 2 fewer columns than $w|_{D_i}$.  Each
$w|_{E_i}$ must heap-avoid $s_2 s_1 s_3 s_2$ and contains entries in the
extremal columns of $E_i$ that are shared with $D_{i}$ or $D_{i+1}$.

Conversely, suppose there exists a fixed column assignment $E_0, D_1, E_1,
\ldots, D_k, E_k$  and we are given a sequence of heaps $e_0, d_1, e_1, \ldots,
d_k, e_k$ such that $e_0 \in S_{|E_0|+1}^{[321]}(s_2 s_1 s_3 s_2)$ with one
entry in the rightmost column of $E_0$, $d_i \in S_{|D_i|-1}^{[321]}(g)$ and
$d_i$ is connected for $1 \leq i \leq k$, $e_i \in S_{|E_i|+1}^{[321]}(s_2 s_1
s_3 s_2)$ for $1 \leq i \leq k-1$ with one entry in each of the extremal columns
of $E_i$ and $e_k \in S_{|E_i|+1}^{[321]}(s_2 s_1 s_3 s_2)$ with one entry in
the leftmost column of $E_k$.  Then we can apply Proposition~\ref{p:dr_bij} to
form the reverse diamond reduction $\widetilde{d_i}$ of $d_i$ and glue these
heaps together to obtain an element of $S_{n+1}^{[321]}(h)$.  Specifically, we
identify the unique entry in the rightmost column of $e_i$ with the unique entry
in the leftmost column of $\widetilde{d_{i+1}}$ for each $0 \leq i \leq k-1$,
and we also identify the unique entry in the leftmost column of $e_i$ with the
unique entry in the rightmost column of $\widetilde{d_{i}}$ for each $1 \leq i
\leq k$.

If we compose the constructions we have given in either order, we obtain the
identity.  Hence, we have shown a bijection.

Let $E(x)$ be the generating function appearing in
Lemma~\ref{l:diamond_avoiding}.  Define $E_{LR}(x) = E(x) - x E(x) - 1$ and
$E_{M}(x) = E(x) - 2x E(x) + x^2 E(x) - 1 + x$.  It is straightforward to verify
using inclusion-exclusion that $E_{LR}(x)$ counts the number of $s_2 s_1 s_3
s_2$-avoiding heaps with one entry in at least one of the two extremal columns,
and $E_{M}(x)$ counts the number of $s_2 s_1 s_3 s_2$-avoiding heaps with one
entry in both of the extremal columns.

Putting all of these observations together, we have the generating function
identity
\begin{equation}\label{e:main_diamond}
F(x) = E(x) + E_{LR}(x) {1 \over {1 - G_{c}(x) E_{M}(x)}} G_{c}(x) E_{LR}(x).
\end{equation}
Here, $E(x)$ contributes terms of the form where $k = 0$.  This formula
simplifies to the first equality given in the statement.  The second equality
follows from Lemma~\ref{l:dr_connected}.
\end{proof}

\begin{remark}
In the nicest cases, this result allows us to reduce the problem of enumerating
permutations that avoid a given heap pattern to counting lattice paths that
avoid a certain consecutive subpath.  This problem is well known to have a
rational generating function and the transfer matrix method can be used to find
the generating function explicitly.  See \cite[Example 4.7.5]{ec1} for details.
\end{remark}

\begin{example}
\[ \sum_{n \geq 0} |S_{n+1}^{[321], [356124], [456123]}| \ x^n = \sum_{n \geq 0} |S_{n+1}^{[321]}(s_4 s_5 s_2 s_3 s_4 s_1 s_2 s_3)| \ x^n = { {1-3x+2x^2-x^3} \over {1 - 5x + 7x^2 -4x^3+x^4}}. \]

The first equality follows from Theorem~\ref{t:translate_olh}.
The heap pattern $p = s_4 s_5 s_2 s_3 s_4 s_1 s_2 s_3$ has a diamond reduction to $q = s_1 s_3 s_2$.
\[
\xymatrix @=3pt @! {
 & & \hf \ar@{-}[dl] \ar@{-}[dr] & & \\
 & \hf \ar@{-}[dl] \ar@{-}[dr] & \hb & \hf \ar@{-}[dl] \ar@{-}[dr] \\
 \hf \ar@{-}[dr] & \hb & \hf \ar@{-}[dl] \ar@{-}[dr] & \hb & \hf \ar@{-}[dl] \\
 & \hf & & \hf \\
}
\parbox[t]{1in}{ \vspace{.3in}
\hspace{.3in} $\longrightarrow$ }
\xymatrix @=3pt @! {
 & & & \\
 \hs & & \hb & & \hs \\
 & \hb & & \hb \\
}
\]
The number of connected permutations that heap-avoid $q$ is the same as the
number of lattice paths that never contain a consecutive up-down subpath.  It is
straightforward to see that there are $n$ such lattice paths that use $n$ nodes.
The corresponding generating function is $G_{c}(x) = {x \over {(1-x)^2}}$ and
substituting this into Equation~(\ref{e:dr_main}) yields the result.
\end{example}

\begin{example}
\[ \sum_{n \geq 0} |S_{n+1}^{[321], [46718235], [46781235], [56718234], [56781234]}| \ x^n
= \sum_{n \geq 0} |S_{n+1}^{[321]}(s_5 s_6 s_7 s_3 s_4 s_5 s_6 s_2 s_3 s_4 s_5 s_1 s_2 s_3)| \ x^n \]
\[ = { {1-4x+4x^2-3x^3-x^4+x^5} \over {1 - 6x + 11x^2 -9x^3+4x^4+4x^5-x^6}}. \]
recovering Theorem~\ref{t:s-w}

The first equality follows from Theorem~\ref{t:translate_olh}.
The hexagon pattern has a diamond reduction to $q = s_4 s_5 s_2 s_3 s_4 s_1 s_2$
as shown in Example~\ref{ex:3hex}.  However, the diamond reduction of $q$ is
$s_1 s_3$ which is disconnected.  Hence, we must enumerate the permutations
heap-avoiding $q$ directly.  We use the same column assignment as in the proof
of Theorem~\ref{t:main_diamond}.  In order to heap-avoid $q$, the intervals of
columns supporting diamonds must restrict to fully commutative permutations
whose diamond reduction is a connected monotonic lattice path.  Also, we can no
longer glue two diamond-containing regions together along a trivial lattice path
with one column.  Hence, we must modify the $E_{M}(x)$ term of
Equation~(\ref{e:main_diamond}) by subtracting $x$.

The monotonic lattice paths are counted by $G_{c}(x) = {{2x} \over {1-x}} -x$.
Substituting this into Equation~(\ref{e:main_diamond}) along with $E_{M}(x) =
E(x) - 2x E(x) + x^2 E(x) - 1$ gives the generating function for all
$q$-heap-avoiding permutations.  Applying Theorem~\ref{t:main_diamond} then
yields the result.
\end{example}


\bigskip
\section*{Acknowledgments}

We thank Sara Billey, Jozsef Losonczy, and Richard Green for many useful
suggestions, as well as Julian West for introducing us to Olivier Guibert's
code \cite{guibert} for permutation pattern enumeration.


\end{document}